\newcommand{\comp}{\ensuremath\mathop{\scalebox{.6}{$\circ$}}}
\theoremstyle{definition}
\newtheorem{theorem}{Theorem}[section]
\newtheorem{lemma}{Lemma}[section]
\newtheorem{corollary}{Corollary}[section]
\newtheorem{definition}{Definition}[section]
\newtheorem{remark}{Remark}[section]
\def\acknowledgement{\par\addvspace{17pt}\small\rmfamily
	\trivlist\if!\ackname!\item[]\else
	\item[\hskip\labelsep
	{\bfseries\ackname}]\fi}
\def\ackname{Acknowledgements}
\begin{document}
\title{{\LARGE\bf{On the supporting quasi-hyperplane and separation theorem of geodesic convex sets with applications on Riemannian manifolds}}}
\author{Li-wen Zhou$^{a,b}$, Ling-ling Liu$^{a,b}$, Chao Min$^{a,b}$, Yao-jia Zhang$^{a,b}$
	 and Nan-Jing Huang$^{c}$\footnote{E-mail address: njhuang@scu.edu.cn; nanjinghuang@hotmail.com}\\
	$^a${\small\it School of Sciences, Southwest Petroleum University, Chengdu, 610500, P.R. China}\\
	$^b${\small\it Institute for Artificial Intelligence, Southwest Petroleum University, Chengdu 610500, Sichuan, P. R. China}\\
$^c${\small\it Department of Mathematics, Sichuan University, Chengdu,
	Sichuan 610064, P.R. China}}
\date{ }
\maketitle
\vspace*{-9mm}
\begin{center}
\begin{minipage}{5.6in}
\noindent{\bf Abstract.} In this paper, we first establish the separation theorem between a point and a locally geodesic convex set and then prove the existence of a supporting quasi-hyperplane at any point on the boundary of the closed locally geodesic convex set on a Riemannian manifold. As applications, some optimality conditions are obtained for optimization problems with constraints on Riemannian manifolds.
\\ \ \\
{\bf Keywords:} Generalized optimization problem; quasi-hyperplane; separation theorem; manifold optimization problem.
\\ \ \\
{\bf 2020 AMS Subject Classifications}: 49J40; 47J20.
\end{minipage}
\end{center}

\section{Introduction} \noindent

In recent years, some large-scale, high-dimensional, nonconvex and nonsmooth optimization problems from artificial intelligence machine learning need to be solved. However, traditional iterative algorithms used to solve these problems is not satisfactory. In order to solve these optimization problems in engineering applications and theoretical research, researchers try to embed the original complex problems from Euclidean space to the differential manifold by selecting appropriate metrics, linearizing the nonlinear problem, transforming the nonconvex programming into geodesic convex programming, and reducing the dimension of high-dimensional big data through isometric mapping, so as to simplify the problem. In general, the main difference between the manifold optimization problem and the classical optimization problem is that the differential manifold is not a linear space. The natural idea is to extend the related concepts and methods of linear space to Riemannian manifolds by using geodesics (see \cite{WK,tr,CU,do} for more details). Actually, the concepts and techniques in the last decades, which fit in Euclidean spaces, have extended to the nonlinear framework of Riemannian manifolds (see, for example, \cite{jka, ugmt,tr,OP2,OP3,OP1,llly,llm,llm2,nemeh,colao,tang,zhou1,zhou2,li1,li2,ch}, and the references therein).

Generally speaking, convexity plays a very important role in optimization theory and related problems. On this account, introducing geodesic convexity into manifolds to study optimization problems is an interesting research direction. The manifold optimization problems is not a simple generalization of the classical optimization problem in Euclidean space. Although the differential manifold is a local differential homeomorphism in an Euclidean space, the key to this problem is that the convexity is not topological homeomorphism invariance, and it is much more complex on the general manifold than Euclidean space. Therefore, the results involving convexity are essentially different from the classical ones in Euclidean space. For example, on Hadamard manifolds, N\'{e}meth \cite{nemeh} proved the Brouwer fixed point theorem under geodesic convexity, and obtained the existence of solutions for a class of variational inequality problems; Colao  et al. \cite{colao} and Zhou et al. \cite{zhou1} proved the existence of the solution for solving one equilibrium problem. We also characterized the geodesic pseudo-convex combination and proved that the KKM mapping on the differential manifold has the property of non empty intersection \cite{zhou2,zhou3}. We believe that the further study of geodesic convexity and related problems on differential manifolds is an innovative work different from the classical conclusion in linear space.

Besides, many scholars have paid attention to the optimality conditions and related problems of nonlinear programming problems on manifolds (for example, see \cite{bh,ror,ch1,ugmt} and its references). On the basis of these above research, it is interesting and necessary to further study the constrained manifold optimization problem. This is because the Karush-Kuhn-Tucker (KKT) condition of constrained optimization in Euclidean linear space is derived from the separation theorem of convex sets. The separation of convex sets, the existence and properties of supporting hyperplanes play important roles in constrained optimization problems. In classical theory, the separation theorem is derived from the algebraic description of the projection of a point on a closed convex set. The algebraic expression is a form of variational inequality, which means that variational inequality is also sufficient in this problem. In addition, the results in related research also include the characterization of various constraint features and Fakas' Lemma. Moreover, the above studies can be characterized equivalently by the theory and geometric properties of cones, which means that the polar cones, tangent cones and normal cones defined on tangent spaces are effective tools for studying the constraints and optimality conditions in manifold optimization problems. The main idea of this paper is to extend the above conclusions to nonconvex case, and discuss the concept and properties of cones on tangent spaces by using the geometric properties of manifolds. Specifically, we study the separation theorem between a point and a locally geodesic convex set on a Riemannian manifold, and prove that there exists a supporting quasi hyperplane at any point on the boundary of a closed locally geodesic convex set. As an application, we obtain a class of optimality conditions for constrained optimization problems on Riemannian manifolds.

The rest of this paper is organized as follows. Section 2 presents some necessary concepts, lemmas and propositions. In Section 3, a quasi-hyperplane is defined and a result is given to show that a point can be separated from a local geodesic convex set by this quasi-hyperplane. A proof is also obtained to indicate that there is a supporting quasi-hyperplane at any point on the boundary of a local geodesic convex set. Before summarizing this paper, in Section 4, some concepts and properties of tangent cones, polar cones and normal cones on manifolds are discussed. As applications, the optimality conditions for a class of constrained manifold optimization problems have been established.

\section{Preliminaries}\noindent
\setcounter{equation}{0}
In this section, we recall some definitions and basic properties about the geometry of the manifolds used in this paper, which can be found in many introductory books on Riemannian and differential geometry (see, for example, \cite{do,gro}).

Let $M$ be a smooth, simply connected, Hausdorff $m$-dimensional manifold, which has a countable atlas. By given $x \in M$, the tangent space of $M$ at $x$ denoted by $T_x M$ is a linear subspace, and the tangent bundle of $M$ defined by
$$TM:=\bigcup_{x\in M} T_x M,$$
is a naturally manifold. A vector field $V$ on $M$ is a mapping of $M$ into $TM$, which is associated with each point $x\in M$ and a vector $V(x) \subset T_x M$. The manifold $M$ can be endowed with a Riemannian metric to become a Riemannian manifold. Then the scalar product and the norm denoted by $\langle \cdot , \cdot \rangle$ and  $\|\cdot\|$ respectively are defined on $T_x M.$ Recall that the length of a piecewise smooth curve $\gamma : [a, b]\rightarrow M$ joining $x=\gamma(a)$ to $y=\gamma(b)$ is defined as
$$L(\gamma):=\int_a^b \|\dot{\gamma}(t)\|dt.$$
For any $x, y\in M$, the Riemannian distance $d(x, y)$ is the minimal length of all the curves joining $x$ to $y$ (see \cite{CU}, p.22).

A vector field $V$ is said to be parallel along a smooth curve $\gamma\in M$ if and only if $\nabla_{\dot{\gamma}(t)} V=0,$ where $\nabla$ is the Levi-Civita connection associated with $(M, \langle\cdot ,\cdot \rangle ).$ The smooth curve $\gamma$ is said to be a geodesic if and only if $\nabla_{\dot{\gamma}(t)}{\dot{\gamma}(t)}=0.$ The gradient of a differentiable function $f:M\to \mathbb{R}$, $\mbox{grad}f$, is a vector field on $M$ defined through $df(X) = \langle \mbox{grad}f, X\rangle=X(f)$, where $X$ is also a vector field on $M$. A geodesic joining $x$ to $y$ in $M$ is said to be minimal if and only if its length is equal to $d(x, y)$.

As Cheeger and Cromoll have shown in \cite{cg}, any closed locally convex and connected set $C\subset M$ bears the structure of an imbeded $k$-dimensional submanifold with smooth totally geodesic interior of $C$.

\begin{definition}
The interior of $C$, denoted by $\mbox{int}(C)$ is the union of all submanifolds of $M$ contained in $C,$ and the boundary of $C$ is $\mbox{bd}(C):=C\backslash \mbox{int}(C).$
\end{definition}

\begin{definition}(\cite{gro}, \cite{CU}, p.17)
The exponential mapping $exp_p: T_p M\rightarrow{M}$ is defined by $exp_p \nu:=\gamma_{\nu}(1)$, where $\gamma_{\nu}$ is the geodesic
defined by its position $p$ and velocity $\nu$ at $p$.
\end{definition}

%We use $\nabla$ to introduce an isometry $P_{\gamma,\cdot,\cdot}$ on the tangent bundle $TM$ along $\gamma$, which is called the parallel transport and defined as
%$$P_{\gamma,\gamma(b),\gamma(a)}(v):=V(\gamma(b)),\quad \forall a, b\in R
%\mbox{ and }v\in T_{\gamma(a)}M,$$
 %where $V$ is the unique vector field satisfying $\nabla_{\gamma'(t)}V=0$ for all $t$ and $V(\gamma(a))=v$. Without any confusion, $\gamma$ can be omitted when it is a minimal geodesic joining $x$ to $y.$

%\begin{definition}
%A Hadamard manifold $M$ is a simply-connected complete Riemannian
%manifold of non-positive sectional curvature.
%\end{definition}

%\begin{lemma}{\rm (\cite{do}, p.149, Theorem 3.1, Cartan-Hadamard theorem)}\label{l0:this}
%Let $M$ be a Hadamard manifold, then the universal cover of $M$ is a convex
%geodesic space with respect to the induced length metric $d$. In
%particular, any two points of the universal cover are joined by a
%unique geodesic.
%\end{lemma}

%\begin{proposition}{\rm (\cite{do}, p.149, Theorem 3.1)}\label{p2.1}
%Let $M$ be a Hadamard manifold and $p\in M$. Then, $exp_p: T_p M\rightarrow M$ is a diffeomorphism, and for
%any two points $p, q\in M$, there exists a unique minimal geodesic $\gamma_{p, q}(t)=exp_p t exp_p^{-1}q$ for all $t\in [0, 1]$ joining
%$p$ to $q$.
%\end{proposition}

It follows from (\cite{gro}, p.63-64) the definition of geodesic flow that there is an open starlike neighborhood $\mathscr{U}$ of zero vectors in $TM$ and an open neighborhood $\mathscr{V}$ in $M\times M$ such that the exponential map $u\mapsto \exp_{\pi(u)}u$ is smooth on $\mathscr{U}$, and
\begin{equation}\label{geoflow}
	\pi\times \exp: \mathscr{U}\rightarrow \mathscr{V}
\end{equation}
is a diffeomorphism, where $\pi$ is the projection of $TM$ onto $M$. Then, for $u\in \mathscr{U}$, the map
$t\rightarrow \exp tu, 0\leq t\leq 1$, describes a minimal geodesic from $\pi(u)$ to $\exp u$, and this
minimal geodesic is the only one between these points. The diffeomorphism inverse to (\ref{geoflow}) will be denoted by $\Phi: \mathscr{V}\rightarrow \mathscr{U},$ so for any given $(p,q)\in \mathscr{V}, \Phi(p, q)\in \mathscr{U}$ is the initial vector of the unique minimal geodesic from $p$ to $q$ of length $d(p, q)$, which implies that the smoothness of $(p, q)\mapsto|\Phi(p, q)|^2$ on $\mathscr{V}$. Then we can define the geodesic flow as follows:

\begin{definition}
The geodesic flow $\Omega$
\begin{equation}\label{w1}
\Omega: \mathscr{W}\rightarrow TM
\end{equation}
is defined by	
\begin{equation}\label{w2}
	\Omega(u,t):=\partial(\exp tu)/\partial t,
\end{equation}
where $\mathscr{W}$ consisting of all $(u, t)\in TM\times R$ with $tu\in \mathscr{U}$. The open set $\mathscr{W}$ is a neighborhood of $TM\times{0}$, and $\Omega$ is smooth on $\mathscr{W}$.
\end{definition}

%\begin{proposition}{\rm(\cite{do}, p.150, Lemma 3.2)}\label{p2.2}
%The exponential mapping and its inverse are continuous on a Hadamard manifold.
%\end{proposition}
\begin{lemma}{\rm(\cite{llm2}, Lemma 2.4)}\label{l2.1}
Let $x_0\in M$ and $\{x_n\}\subset M$ such that $x_n\rightarrow x_0$. Then, the following assertions hold.
\begin{itemize}
\item[(i)] For any given $y\in M$,
$exp^{-1}_{x_n} y\rightarrow exp^{-1}_{x_0} y$ and $exp^{-1}_y x_n \rightarrow exp^{-1}_y x_0;$

\item[(ii)] If $\{v_n\}$ is a sequence such that $v_n \in T_{x_n} M$ and $v_n\rightarrow v_0$, then $v_0\in T_{x_0} M$;

\item[(iii)] For given sequences $\{u_n\}$ and $\{v_n\}$ satisfying $u_n, v_n\in T_{x_n} M$, if $u_n\rightarrow u_0$ and $v_n \rightarrow v_0$ with $u_0, v_0\in T_{x_0} M$, then
$$\langle u_n, v_n\rangle\rightarrow \langle u_0, v_0\rangle.$$
\end{itemize}
\end{lemma}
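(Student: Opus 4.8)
All three assertions are statements that certain canonical geometric operations are continuous, so the plan is to reduce each of them to continuity of a smooth map together with the Hausdorff property of $M$. I would prove them in the order (ii), (iii), (i), because (ii) is used to locate the limits that appear in (i).

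\emph{Part (ii).} Recall that $TM$ carries its canonical smooth manifold structure and that the footpoint projection $\pi:TM\to M$ is smooth, hence continuous. From $v_n\to v_0$ in $TM$ we therefore get $\pi(v_n)\to\pi(v_0)$ in $M$. Since $v_n\in T_{x_n}M$ means $\pi(v_n)=x_n$ and $x_n\to x_0$ by hypothesis, uniqueness of limits in the Hausdorff space $M$ forces $\pi(v_0)=x_0$, i.e. $v_0\in T_{x_0}M$.

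\emph{Part (iii).} Here I would argue in local coordinates. Fix a chart $(\mathcal{O},\varphi)$ around $x_0$ with induced local frame $\{\partial_1,\dots,\partial_m\}$ of $TM$ over $\mathcal{O}$; for $n$ large, $x_n\in\mathcal{O}$. Writing $u_n=\sum_i u_n^i\,\partial_i|_{x_n}$, $v_n=\sum_i v_n^i\,\partial_i|_{x_n}$ and similarly $u_0,v_0$ at $x_0$, convergence in $TM$ means precisely that $x_n\to x_0$ in coordinates and $u_n^i\to u_0^i$, $v_n^i\to v_0^i$ for every $i$. Since the metric coefficients $g_{ij}=\langle\partial_i,\partial_j\rangle$ are smooth, hence continuous, on $\mathcal{O}$, we obtain
\[
\langle u_n,v_n\rangle=\sum_{i,j}g_{ij}(x_n)\,u_n^i v_n^j\;\longrightarrow\;\sum_{i,j}g_{ij}(x_0)\,u_0^i v_0^j=\langle u_0,v_0\rangle,
\]
which is the assertion. (Equivalently, the metric is a smooth function on the Whitney sum $TM\oplus TM$ and $(u_n,v_n)\to(u_0,v_0)$ there.)

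\emph{Part (i) and the main obstacle.} Put $v_n:=\exp^{-1}_{x_n}y$ and $v_0:=\exp^{-1}_{x_0}y$, so $v_n\in T_{x_n}M$, $\exp_{x_n}v_n=y$ and $\|v_n\|=d(x_n,y)$. Once $x_n$ is close enough to $x_0$, the pair $(x_n,y)$ lies in the neighbourhood $\mathscr{V}$ of (\ref{geoflow}) and $v_n=\Phi(x_n,y)$; since $\Phi$ is smooth, $\Phi(x_n,y)\to\Phi(x_0,y)$, i.e. $v_n\to v_0$. If one prefers not to assume $(x_0,y)\in\mathscr{V}$ from the outset, the same conclusion follows by a compactness argument: continuity of the distance gives $\|v_n\|=d(x_n,y)\to d(x_0,y)$, so $\{v_n\}$ stays in a compact subset of $TM$; any subsequential limit $\bar v$ lies in $T_{x_0}M$ by part (ii), satisfies $\exp_{x_0}\bar v=\lim\exp_{x_n}v_n=y$ by continuity of $\exp$, and has $\|\bar v\|=d(x_0,y)$, hence $\bar v$ is the initial velocity of the unique minimal geodesic from $x_0$ to $y$, so $\bar v=v_0$; since every subsequence has a further subsequence converging to $v_0$, the whole sequence converges to $v_0$. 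For the second convergence, note that $\exp^{-1}_y$ is continuous on its domain, being the inverse of the (local) diffeomorphism $\exp_y$, so $\exp^{-1}_y x_n\to\exp^{-1}_y x_0$ follows from $x_n\to x_0$. The one delicate point, and the place where the ambient hypotheses on $M$ (or of the cited source) must be invoked, is precisely the well-definedness and uniqueness behind $\exp^{-1}$ in part (i): one needs $(x_0,y)$ and the $(x_n,y)$ to lie in the region $\mathscr{V}$ where a unique minimal geodesic exists, so that $\exp^{-1}_{x_0}y$ is meaningful and the subsequential limit $\bar v$ is forced to equal it; granting this, the rest is routine continuity of the smooth maps $\pi$, $\Phi$, $\exp$ and the metric.
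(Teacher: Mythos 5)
The paper does not prove this lemma at all: it is imported verbatim as a citation from \cite{llm2} (Lemma 2.4 there), so there is no in-paper argument to compare yours against. On its own merits your proof is correct, and it is essentially the standard one: (ii) is continuity of the footpoint projection $\pi$ plus uniqueness of limits in a Hausdorff space; (iii) is continuity of the metric coefficients $g_{ij}$ in a chart (equivalently, smoothness of the metric on $TM\oplus TM$); and (i) is continuity of $\Phi=(\pi\times\exp)^{-1}$ in its first argument, or the subsequence-plus-uniqueness argument you give. You also correctly isolate the only genuinely delicate point, namely that $\exp^{-1}_{x_n}y$ must be well defined and single-valued, which requires $(x_n,y)$ and $(x_0,y)$ to lie in the region $\mathscr{V}$ of (\ref{geoflow}) where a unique minimal geodesic exists. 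It is worth noting that in the cited source \cite{llm2} this issue does not arise because the ambient space there is a Hadamard manifold, on which $\exp_x$ is a global diffeomorphism for every $x$ and $\exp^{-1}$ is globally smooth; the present paper, by contrast, uses the lemma on a general Riemannian manifold, where your caveat is exactly the hypothesis one must add (and the authors implicitly work inside such neighbourhoods throughout Section 3). So your proposal is not only correct but supplies a justification the paper omits, together with the precise condition under which the statement remains valid in the more general setting.
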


\begin{definition}(\cite{Rolf})
A subset $C\subset M$ is said to be weakly geodesic convex if and only if for any two points $x, y\in C$, there is a minimal geodesic $\gamma: [a, b]\rightarrow M$ from $x$ to $y$ lying in $C$, that is, if $\gamma : [a, b]\rightarrow M$ is a geodesic such that $x=\gamma (a)$ and $y=\gamma (b)$, then $\gamma(t)\in C$ for all $t\in [a, b].$ If such a minimal geodesic is unique within $C$ then $C$ is said to be geodesic convex.
\end{definition}

\begin{definition}(\cite{Rolf, rinow})
A subset $C\subset M$ is said to be strongly geodesic convex if and only if for any two points $x, y\in C$, there is just one minimal geodesic from $x$ to $y$ and it is in $C.$ In this case the image of the geodesic is denoted by $\gamma_{x, y}$ and is called geodesic segment.
\end{definition}

\begin{definition}
For each $p\in M$, there is a largest $r(p)>0,$ which is called geodesic convex radius, such that for $0<r\leq r(p)$, all the open ball $\mbox{Bal}_M(p,r)$ in $M$ is strongly geodesic convex and each geodesic in $\mbox{Bal}_M(p,r)$ is minimal.
\end{definition}

\begin{remark}(\cite{Rolf})
There is a largest $d(p)>0$ such that, for $0<d\leq d(p),$ the Euclidean ball $\mbox{Bal}_{T_p M}(0,d)$ in $T_p M$ is diffeomopphic to $\mbox{Bal}_M(p,r)$ via the exponential map $\exp_p.$
\end{remark}

\begin{definition}\label{rodf1}
A subset $C\subset M$ is said to be locally geodesic convex if and only if for any $p$ in the closure of $C$, namely, $\mbox{cl}(C)$, there is a positive $\varepsilon(p)<r(p)$ such that $C\cap \mbox{Bal}_{M}(p,\varepsilon(p))$ is strongly geodesic convex.
\end{definition}

From now on, let a Riemannian manifold $M$ be endowed by a
Riemannian metric $\langle\cdot, \cdot\rangle$ with corresponding
norm denoted by $\|\cdot\|,$ and a subset $C\subset M$ is locally geodesic convex. Now we recall some useful results about the geodesic on a Riemannian manifold.

\begin{lemma}(\cite{cg}, Lemma 2.3)
	For each compact subset $A\subset M$ there is an $\varepsilon_A>0$ such that, for all $p\in A$ and $0<r\leq \varepsilon_A\leq r(p),$	if $\gamma:[0, b]\rightarrow \mbox{Bal}_{M}(p,r)$ is a nonconstant geodesic, and $\gamma_0:[0, 1]\rightarrow \mbox{Bal}_{M}(p,r)$ is the minimal geodesic from $p$ to $\gamma(0)$, with $\left\langle \dot{\gamma}(0),\dot{\gamma}_0(1) \right\rangle_{T_{\gamma(0)}M} \geq 0,$ then $t\mapsto d(\gamma(t), p)$ is strictly increasing on $[0, b].$
\end{lemma}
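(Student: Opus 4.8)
The plan is to study the auxiliary function $t\mapsto \tfrac12 d(\gamma(t),p)^2$ and to show that it is strictly increasing on $[0,b]$ by a second–variation (Hessian) argument; the monotonicity of $t\mapsto d(\gamma(t),p)$ is then an immediate consequence.

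First I would use the compactness of $A$ to fix the constant $\varepsilon_A$. On a relatively compact neighbourhood of $A$ the sectional curvature is bounded, and the functions $p\mapsto r(p)$ and $p\mapsto d(p)$ (from the Definition and Remark above) are bounded below by a positive constant on $A$. Hence one may choose $\varepsilon_A>0$ so small that, for every $p\in A$, the ball $B_p:=\mathrm{Bal}_M(p,\varepsilon_A)$ is strongly geodesic convex, lies inside the domain on which $\exp_p^{-1}$ is a diffeomorphism, and the smooth function $\varphi_p:=\tfrac12 d(\cdot,p)^2$ on $B_p$ satisfies $\mathrm{grad}\,\varphi_p(x)=-\exp_x^{-1}p$ together with the uniform lower bound $\mathrm{Hess}_x\varphi_p(v,v)\ge c\|v\|^2$ for all $x\in B_p$, $v\in T_xM$, for some constant $c>0$ (one may take $c=\tfrac12$ after shrinking $\varepsilon_A$). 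This last estimate is the heart of the matter: it follows from Rauch's comparison theorem (equivalently, from the Hessian comparison theorem applied to the distance function), since as $\varepsilon_A\to 0$ the Hessian of $\varphi_p$ tends uniformly to the Riemannian metric; the role of the compactness of $A$ is precisely to make this choice of $\varepsilon_A$ independent of $p\in A$.

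Now fix $p\in A$, $0<r\le\varepsilon_A$ and $\gamma,\gamma_0$ as in the statement; since $r\le\varepsilon_A$ the whole image $\gamma([0,b])$ lies in $B_p$, where $\varphi_p$ is smooth. Put $h(t):=\varphi_p(\gamma(t))=\tfrac12 d(\gamma(t),p)^2$. Differentiating and using that $\gamma$ is a geodesic (so $\nabla_{\dot\gamma}\dot\gamma=0$ and $\|\dot\gamma\|$ is constant), one gets
\[
h'(t)=\langle \mathrm{grad}\,\varphi_p(\gamma(t)),\dot\gamma(t)\rangle=-\langle \exp_{\gamma(t)}^{-1}p,\dot\gamma(t)\rangle,
\qquad
h''(t)=\mathrm{Hess}_{\gamma(t)}\varphi_p(\dot\gamma(t),\dot\gamma(t))\ge c\|\dot\gamma(0)\|^2>0,
\]
the strict inequality because $\gamma$ is nonconstant. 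Hence $h'$ is strictly increasing on $[0,b]$. To compute $h'(0)$, note that since $\gamma_0:[0,1]\to B_p$ is the minimal geodesic from $p$ to $\gamma(0)$, the reversed geodesic $s\mapsto\gamma_0(1-s)$ is the minimal geodesic from $\gamma(0)$ to $p$ with initial velocity $-\dot\gamma_0(1)$, so $\exp_{\gamma(0)}^{-1}p=-\dot\gamma_0(1)$, whence $h'(0)=\langle \dot\gamma_0(1),\dot\gamma(0)\rangle\ge 0$ by hypothesis. Combined with the strict monotonicity of $h'$, this gives $h'(t)>0$ for all $t\in(0,b]$, so $h$ is strictly increasing on $[0,b]$; since $d(\gamma(t),p)=\sqrt{2h(t)}$ with $h\ge 0$, the map $t\mapsto d(\gamma(t),p)$ is strictly increasing on $[0,b]$. (If $\gamma(0)=p$, then $\gamma_0$ is constant, the hypothesis holds trivially, and the argument is unchanged.)

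The only genuinely delicate point in this scheme is the uniform Hessian estimate of the first step, i.e.\ extracting a single $\varepsilon_A$ that serves every $p\in A$ simultaneously; everything after that is a routine computation. An alternative to passing through $\mathrm{Hess}\,\tfrac12 d(\cdot,p)^2$ would be to bound the index form of the distance function directly via Jacobi field (Rauch) estimates, but the formulation above seems cleanest.
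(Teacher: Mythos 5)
The paper gives no proof of this lemma at all: it is quoted verbatim from Cheeger--Gromoll \cite{cg}, so there is nothing internal to compare against. Your argument is a correct, self-contained proof and is essentially the classical one: the Hessian comparison (Rauch) estimate $\mathrm{Hess}\,\tfrac12 d(\cdot,p)^2\geq c\,\langle\cdot,\cdot\rangle$ on sufficiently small balls, with $\varepsilon_A$ uniform over $p\in A$ by compactness, gives strict convexity of $h(t)=\tfrac12 d(\gamma(t),p)^2$ along the geodesic, and the hypothesis $\langle\dot\gamma(0),\dot\gamma_0(1)\rangle\geq 0$ is exactly $h'(0)\geq 0$ via $\mathrm{grad}\,\varphi_p(x)=-\exp_x^{-1}p$, whence $h'>0$ on $(0,b]$ and $h$ (hence $d(\gamma(\cdot),p)=\sqrt{2h}$) is strictly increasing on $[0,b]$. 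You correctly identify the only delicate points yourself (smoothness of $d(\cdot,p)^2$ on the whole ball including $p$, and the uniformity of the Hessian bound over the compact set $A$), and you handle the degenerate case $\gamma(0)=p$; I see no gap.
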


\begin{lemma}(\cite{Rolf}, Theorem 1)\label{roth1}
	For each locally geodesic convex set $C\subset M,$ if $C$ is closed, then there is an open neighborhood $U$ of $C$ such that, for each $q\in U$, there exists a unique $q^*\in C$ such that
	$$d(q, q^*)=\inf\{d(q, p)|p\in C\}=:d(q, C),$$
	which is a unique minimal geodesic from $q$ to $q^*$ lying in $U.$
\end{lemma}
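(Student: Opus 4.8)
The plan is to prove the statement locally around each point of $C$, exploiting the local strong geodesic convexity guaranteed by Definition~\ref{rodf1}, and then to glue the local nearest-point maps into a single projection on a suitable neighborhood. Fix $p\in C$. First choose $\varepsilon_0(p)<r(p)$ so that $C\cap\mbox{Bal}_{M}(p,\varepsilon_0(p))$ is strongly geodesic convex; by the remark that $\exp_p$ maps a Euclidean ball diffeomorphically onto a metric ball, $A:=\overline{\mbox{Bal}_{M}(p,\varepsilon_0(p))}$ is compact, so the Cheeger--Gromoll estimate $(\cite{cg},\text{ Lemma }2.3)$ yields a constant $\varepsilon_A>0$; set $\varepsilon(p):=\min\{\varepsilon_0(p),\varepsilon_A\}$, write $C_p:=C\cap\mbox{Bal}_{M}(p,\varepsilon(p))$ and $B_p:=\mbox{Bal}_{M}(p,\varepsilon(p)/4)$. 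The local claim is: for every $q\in B_p$ the infimum $d(q,C)$ is attained at exactly one point $q^{*}$, this $q^{*}$ lies in $C_p$ and is in fact the only nearest point of $q$ in all of $C$, and there is exactly one minimal geodesic from $q$ to $q^{*}$.

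If $q\in C$ the claim is trivial, so assume $q\notin C$. For existence, a minimizing sequence $p_n\in C$ eventually lies in the compact set $A$ (since $d(p,p_n)\le d(p,q)+d(q,p_n)$ is eventually below $\varepsilon(p)$), and a subsequential limit $q^{*}\in C$ realizes $d(q,C)$; as $d(p,q^{*})\le d(p,q)+d(q,C)<\varepsilon(p)/2$, it lies in $C_p$, as must any nearest point of $q$. The heart of the argument is uniqueness. If $p_1^{*}\ne p_2^{*}$ were two nearest points, strong geodesic convexity of $C_p$ gives a unique nonconstant minimal geodesic $\sigma:[0,1]\to C_p$ joining them; since $d(q,\sigma(t))\le d(q,C)+d(p_1^{*},p_2^{*})\le 3\,d(q,C)<\varepsilon(p)\le\varepsilon_A$, both $\sigma$ and the minimal geodesic $\gamma_0:[0,1]\to M$ from $q$ to $p_1^{*}$ lie in $\mbox{Bal}_{M}(q,\varepsilon(p))$, and $q\in A$ forces $\varepsilon(p)\le\varepsilon_A\le r(q)$. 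The function $t\mapsto d(q,\sigma(t))$ attains its minimum on $[0,1]$ at $t=0$ (as $\sigma(t)\in C$), so its right derivative there is nonnegative; by the first variation formula for arc length this derivative is a positive multiple of $\langle\dot\sigma(0),\dot\gamma_0(1)\rangle$, hence $\langle\dot\sigma(0),\dot\gamma_0(1)\rangle\ge 0$. This is exactly the hypothesis of $\cite{cg},\text{ Lemma }2.3$ with centre $q$, so $t\mapsto d(q,\sigma(t))$ is strictly increasing on $[0,1]$, contradicting $d(q,\sigma(0))=d(q,\sigma(1))=d(q,C)$. Thus $q^{*}$ is unique; and any minimal geodesic from $q$ to $q^{*}$ has length $d(q,C)<\varepsilon(p)/4$, so it stays in $\mbox{Bal}_{M}(p,\varepsilon(p))$, where strong geodesic convexity forces it to be the only one.

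To globalize, set $U_1:=\bigcup_{p\in C}B_p$, an open neighborhood of $C$. By the local claim each $q\in U_1$ has a well-defined unique nearest point $\pi(q)\in C$ (the choices from different $B_p$ agree since the nearest point in $C$ is globally unique) and a unique minimal geodesic $\gamma_q:[0,1]\to M$ from $q$ to $\pi(q)$. The map $\pi$ is continuous: if $q_n\to q$, the $\pi(q_n)$ stay in a compact subset of $C$ and any subsequential limit $r$ satisfies $d(q,r)=\lim d(q_n,C)=d(q,C)$ by continuity of $d(\cdot,C)$, so $r=\pi(q)$; hence $(q,s)\mapsto\gamma_q(s)=\exp_q\!\big(s\,\Phi(q,\pi(q))\big)$ is continuous, using the smoothness of the geodesic flow behind (\ref{geoflow}) and the continuity facts of Lemma~\ref{l2.1}. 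Now put $U:=\{q\in U_1:\gamma_q([0,1])\subset U_1\}$; since $q\mapsto\gamma_q$ is continuous into $\mathcal{C}([0,1],M)$ and $U_1$ is open, $U$ is open, and $U\supset C$ because $\gamma_p$ is the constant path $p$. For $q\in U$ and $x=\gamma_q(s)$: a sub-arc of a minimal geodesic is minimal, so $d(x,\pi(q))=(1-s)\,d(q,C)$ and $d(q,x)=s\,d(q,C)$, whence $d(x,r)\ge d(q,r)-d(q,x)\ge d(q,C)-s\,d(q,C)=d(x,\pi(q))$ for all $r\in C$; thus $\pi(q)$ is the nearest point of $x$ in $C$, so $\pi(x)=\pi(q)$, and uniqueness identifies $\gamma_x$ with the reparametrized sub-arc $t\mapsto\gamma_q(s+t(1-s))$, whose image lies in $\gamma_q([0,1])\subset U_1$. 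Therefore $x\in U$, so $\gamma_q([0,1])\subset U$, which is the stated conclusion.

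The main obstacle is the local uniqueness in the second step: one must choose the local radii so that the minimal geodesic joining two candidate nearest points and the geodesic from $q$ back to one of them both lie within the range where $\cite{cg},\text{ Lemma }2.3$ applies, and then recognize that the sign of the first variation of arc length at the nearer endpoint is precisely that lemma's non-obtuseness hypothesis. Once this monotonicity is secured, both the existence part and the (elementary but slightly clever) self-improvement of the neighborhood in the last step go through routinely.
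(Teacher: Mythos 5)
Your proof is correct, and it is worth noting at the outset that the paper itself offers no proof of Lemma~\ref{roth1}: it is quoted verbatim from Walter (\cite{Rolf}, Theorem~1), so there is no in-paper argument to compare against. What you have produced is a sound, self-contained reconstruction very much in the spirit of Walter's original: the decisive step --- deriving $\langle\dot\sigma(0),\dot\gamma_0(1)\rangle\ge 0$ from the first variation of arc length at a nearest point and then invoking the Cheeger--Gromoll monotonicity lemma (\cite{cg}, Lemma~2.3, quoted in Section~2) to rule out a second nearest point --- is exactly the mechanism that makes the local uniqueness work, and your radius bookkeeping ($d(q,\sigma(t))\le 3\,d(q,C)<\varepsilon(p)\le\varepsilon_A\le r(q)$, with $q$ in the compact set $A$) correctly places both $\sigma$ and $\gamma_0$ inside the ball where that lemma applies. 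The globalization via $U:=\{q\in U_1:\gamma_q([0,1])\subset U_1\}$ and the observation that $\pi(\gamma_q(s))=\pi(q)$ for a sub-arc of a minimizing geodesic is also right, and it delivers precisely the ``minimal geodesic lying in $U$'' clause of the statement. Two cosmetic points you should tighten if this were to be written out in full: (i) in an a priori incomplete manifold the closure $\overline{\mbox{Bal}_M(p,\varepsilon_0(p))}$ need not be compact, so take instead $A=\exp_p\bigl(\overline{\mbox{Bal}_{T_pM}(0,\varepsilon_0(p)/2)}\bigr)$, which is compact and still contains the minimizing sequence and all the relevant points; (ii) the continuity of $q\mapsto\gamma_q$ should be argued from the smoothness of $(x,y)\mapsto\exp_x^{-1}y$ on a strongly convex ball rather than from the globally chosen domain $\mathscr{V}$ of $\Phi$, since $(q,\pi(q))\in\mathscr{V}$ is not automatic; neither point affects the validity of the argument.
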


\begin{remark}
	The map $q\mapsto q^*$ is said to be the metric projection onto $C$ and the projection $q^*$ is denoted as $\mbox{Pr}_{C}(q)$.  The previous lemma gives the existence of projection of any point in the geodesic convex open neighborhood of a closed locally geodesic convex set $C.$
\end{remark}

The following lemma shows that the metric projection can be characterized by variational inequality in a tangent space.

\begin{lemma}(\cite{Rolf}, Lemma 2)\label{roth2}
	Let closed set $C$ and $U$ be defined in Lemma \ref{roth1} and $\varepsilon(\cdot)$ be defined in Definition \ref{rodf1}. For each $x\in C$, let $V(x)\subset T_x M$ be a vector field defined by
$$V(x)=\left\{v_x\in T_x M|\exp_{x}t\frac{v_x}{\|v_x\|}\in \mbox{int}(C)\;\mbox{for some positive}\; t<\varepsilon(p)\right\}.$$
Then the following statements hold:
\begin{itemize}
	\item [(i)] For any given $q\in U\backslash C$, let $q^*=\mbox{Pr}_{C}(q)$ be the projection of $q$. Then for each $v_{q^*}\in V(q^*),$ one has
	\begin{equation}\label{eq01}
		\left\langle u,v_{q^*} \right\rangle_{T_{q^*}M} \leq 0,
	\end{equation}
where $u=\dot{\gamma}_{q^*,q}(0)=\exp^{-1}_{q^*}q\in T_{q^*}M$ is the initial vector of $\gamma_{q^*,q}$.

\item [(ii)] Let $p\in C$ and $0\neq u\in T_p M$ satisfy $\left\langle u,v_p \right\rangle_{T_{p}M} \leq 0$ for all $v_p\in V(p)$.  If $t_1>0$ such that $\exp_p tu\in U$ on $[0,t_1]$, $\gamma(t)=\exp_p tu$ is minimal geodesic on  $[0,t_1]$, and $\mbox{Pr}_{C}(\exp_p tu)=p$ on  $[0,t_1]$,  then $\exp_p tu\in U\backslash C$ on $(0, t_1].$
\end{itemize}	
\end{lemma}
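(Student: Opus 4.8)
I would prove (i) and (ii) separately; in both cases the argument reduces to a one-variable study of the squared distance function along a geodesic, the essential background being the local uniqueness of minimal geodesics supplied by the geodesic-flow diffeomorphism $\Phi:\mathscr{V}\to\mathscr{U}$ together with Lemma~\ref{roth1} and Lemma~\ref{l2.1}.

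For (i), fix $q\in U\setminus C$, put $q^{*}=\mbox{Pr}_{C}(q)$ and take any $v_{q^{*}}\in V(q^{*})$ (if $V(q^{*})=\emptyset$ there is nothing to prove). By the definition of $V(q^{*})$ there is $t^{*}\in(0,\varepsilon(q^{*}))$ with $\exp_{q^{*}}\!\bigl(t^{*}v_{q^{*}}/\|v_{q^{*}}\|\bigr)\in\mbox{int}(C)$, and since this point lies within distance $t^{*}<\varepsilon(q^{*})\le r(q^{*})$ of $q^{*}$, strong geodesic convexity of $C\cap\mbox{Bal}_{M}(q^{*},\varepsilon(q^{*}))$ forces the minimal geodesic $\sigma(t):=\exp_{q^{*}}\!\bigl(t\,v_{q^{*}}/\|v_{q^{*}}\|\bigr)$ to remain in $C$ for all $t\in[0,t^{*}]$. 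Set $f(t):=\tfrac12\,d\bigl(q,\sigma(t)\bigr)^{2}$. Since $\sigma(0)=q^{*}$ is joined to $q$ by a unique minimal geodesic lying in $U$ (Lemma~\ref{roth1}), after shrinking $t^{*}$ (and, if necessary, $U$) we may assume $\bigl(q,\sigma(t)\bigr)\in\mathscr{V}$ for every $t\in[0,t^{*}]$, so $f$ is smooth near $0$ by the stated smoothness of $(p,q)\mapsto|\Phi(p,q)|^{2}$, with the gradient of $\tfrac12 d(q,\cdot)^{2}$ at $q^{*}$ equal to $-\exp_{q^{*}}^{-1}q=-u$. As $\sigma(t)\in C$ and $q^{*}$ minimises $d(q,\cdot)$ over $C$, one has $f(t)\ge f(0)$ on $[0,t^{*}]$, hence $f'(0)\ge0$; but $f'(0)=\bigl\langle -u,\;v_{q^{*}}/\|v_{q^{*}}\|\bigr\rangle_{T_{q^{*}}M}$, which gives $\langle u,v_{q^{*}}\rangle_{T_{q^{*}}M}\le0$.

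For (ii), suppose for contradiction that $\exp_{p}\bar t\,u\in C$ for some $\bar t\in(0,t_{1}]$. Since $C\subseteq U$, Lemma~\ref{roth1} applies to this point, and the nearest point of $C$ to a point already lying in $C$ is the point itself, so $\mbox{Pr}_{C}(\exp_{p}\bar t\,u)=\exp_{p}\bar t\,u$. On the other hand $\gamma(t)=\exp_{p}tu$ is a nonconstant minimal geodesic on $[0,t_{1}]$ (because $u\neq0$), whence $d(\exp_{p}\bar t\,u,p)=\bar t\,\|u\|>0$ and $\exp_{p}\bar t\,u\neq p$, contradicting the hypothesis $\mbox{Pr}_{C}(\exp_{p}\bar t\,u)=p$. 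Therefore $\exp_{p}tu\notin C$ for all $t\in(0,t_{1}]$, and since $\exp_{p}tu\in U$ on $[0,t_{1}]$ by hypothesis, $\exp_{p}tu\in U\setminus C$ on $(0,t_{1}]$. The inequality $\langle u,v_{p}\rangle\le0$ for all $v_{p}\in V(p)$ is the feature that guarantees, in the applications, the existence of such a $t_{1}$: were the geodesic issuing from $p$ in direction $u$ to meet $\mbox{int}(C)$ at points arbitrarily close to $p$, one would have $u/\|u\|\in V(p)$ and the hypothesis would force $\|u\|\le0$, impossible; I would record this observation, although it is not needed for the displayed conclusion.

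The only genuine difficulty lies in (i): one must verify that $t\mapsto\tfrac12 d\bigl(q,\sigma(t)\bigr)^{2}$ is differentiable at $0$ with derivative $\bigl\langle-\exp_{q^{*}}^{-1}q,\dot\sigma(0)\bigr\rangle$. This is exactly where Lemma~\ref{roth1} (uniqueness of the minimal geodesic from $q$ to $q^{*}$), the geodesic-flow diffeomorphism (so that $\exp^{-1}$ and $(p,q)\mapsto|\Phi(p,q)|^{2}$ are smooth near the relevant pair), and Lemma~\ref{l2.1} (continuity of the inner products as $t\to0$) all enter; the bookkeeping needed to keep $(q,\sigma(t))$ inside $\mathscr{V}$ throughout is routine but has to be carried out. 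Part (ii), by contrast, is a one-line contradiction once one notes that the metric projection fixes every point of $C$.
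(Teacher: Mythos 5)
Your proof is correct, but note that the paper itself gives no proof of this lemma at all: it is imported verbatim from Walter \cite{Rolf} (his Lemma~2), so there is no internal argument to compare against. Your part (i) is the standard first-variation argument: local strong geodesic convexity keeps $\sigma(t)=\exp_{q^{*}}(t\,v_{q^{*}}/\|v_{q^{*}}\|)$ inside $C$ for small $t$, the minimality of $q^{*}$ gives $f(t)=\tfrac12 d(q,\sigma(t))^{2}\ge f(0)$, and the gradient formula $\operatorname{grad}_{x}\tfrac12 d(q,x)^{2}=-\exp_{x}^{-1}q$ (valid here because $(q,\sigma(t))$ stays in $\mathscr{V}$, where $|\Phi(\cdot,\cdot)|^{2}$ is smooth) yields $\langle u,v_{q^{*}}\rangle\le 0$; this is essentially Walter's own argument and is sound. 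Your treatment of part (ii) is also correct, and your side remark is the more valuable observation: as transcribed in this paper, the hypothesis $\mbox{Pr}_{C}(\exp_{p}tu)=p$ on $[0,t_{1}]$ already forces $\exp_{p}tu\notin C$ for $t>0$ (the projection fixes points of $C$, and $d(\exp_{p}tu,p)=t\|u\|>0$), so the normal-cone condition $\langle u,v_{p}\rangle\le 0$ plays no role in the stated conclusion. In Walter's original the content of (ii) is the \emph{existence} of such a $t_{1}$ given $u\in N_{C}(p)\setminus\{0\}$ --- which is what Theorem~\ref{th3.2} of this paper actually invokes when it asserts ``there exists $t_{1}>0$ such that $\exp_{p}tu\in U$ and $\mbox{Pr}_{C}(\exp_{p}tu)=p$ on $[0,t_{1}]$'' --- so the statement as reproduced here has converted the substantive conclusion into a hypothesis. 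Your proof is a correct proof of the lemma as written; it just cannot (and should not be expected to) recover the stronger existence claim that the later sections implicitly rely on.
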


 The conclusions presented in Lemma \ref{roth2} can also be described by the cone on the tangent space. To this end, we first recall the definition of tangent cone of $C$ as follows.

\begin{definition}
	Let $C$ be a subset of $M$.  For any $p\in C\subset M,$ the tangent cone is defined as
	$$T_C(p):=\left\{v\in T_p M\left|\exp_p t\frac{v}{\|v\|}\in \mbox{int}(C)\,\,\mbox{for some positive}\; t<\varepsilon(p)\right.\right\}\cup\{0\}=V(p)\cup \{0\}.$$
	The polar cone of $T_C(p)$ is said to be a normal cone, which is given by
	$$N_C(p)=T_C(p)^*=\left\{u\in T_p M|\left\langle u, v \right\rangle_{T_p M}\leq 0, \forall v\in T_C(p)\right\}.$$
\end{definition}

\begin{remark}
	A polar cone is always a closed convex set in a tangent space. One has the following equivalences:
	$$p\in \mbox{int}(C)\Leftrightarrow T_C(p)=T_p M\Leftrightarrow N_C(p)=\{0\}.$$
\end{remark}

Thus,  Lemma \ref{roth2} can be rewritten as follows.

\begin{lemma}\label{roth3}
	Let $C, U$ be the same as in Lemma \ref{roth1}, $\varepsilon(\cdot)$ be the same as in Definition \ref{rodf1}, and for any $q\in U\backslash C$, $q^*=\mbox{Pr}_{C}(q)$ be the projection of $q$. If $u$ is the initial vector of $\gamma_{q^*, q},$ then $u\in N_C(q^*).$
\end{lemma}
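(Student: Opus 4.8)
The plan is to observe that Lemma~\ref{roth3} is merely a cone-theoretic reformulation of Lemma~\ref{roth2}(i), so the proof reduces to unwinding the definitions of the tangent cone $T_C(q^*)$ and of its polar, the normal cone $N_C(q^*)$, at the projection point.

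First I would fix $q\in U\backslash C$, set $q^*=\mbox{Pr}_{C}(q)$, and let $u=\dot{\gamma}_{q^*,q}(0)=\exp^{-1}_{q^*}q\in T_{q^*}M$ be the initial vector of the minimal geodesic $\gamma_{q^*,q}$ from $q^*$ to $q$. This vector is well defined: Lemma~\ref{roth1} supplies the existence and uniqueness of a minimal geodesic from $q$ to $q^*$ lying in $U$, and since $q\notin C$ one has $d(q,q^*)>0$, so in fact $u\neq 0$.

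Next I would invoke Lemma~\ref{roth2}(i) to get $\langle u,v_{q^*}\rangle_{T_{q^*}M}\leq 0$ for every $v_{q^*}\in V(q^*)$. Combining this with the trivial identity $\langle u,0\rangle_{T_{q^*}M}=0$, the inequality $\langle u,v\rangle_{T_{q^*}M}\leq 0$ holds for every $v\in V(q^*)\cup\{0\}$. By the definition of the tangent cone one has $T_C(q^*)=V(q^*)\cup\{0\}$, hence $\langle u,v\rangle_{T_{q^*}M}\leq 0$ for all $v\in T_C(q^*)$; but this is precisely the assertion that $u$ lies in the polar cone $T_C(q^*)^*=N_C(q^*)$, which finishes the argument.

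Since all the analytic content is already packaged in Lemma~\ref{roth2}, I do not anticipate a genuine obstacle here; the only points that deserve a moment's attention are the inclusion of the zero vector in the cone (handled above) and the fact that $u$ is a bona fide nonzero element of $T_{q^*}M$, both of which are immediate.
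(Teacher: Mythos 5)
Your proposal is correct and matches the paper exactly: the paper presents Lemma~\ref{roth3} without a separate proof, simply as a restatement of Lemma~\ref{roth2}(i) in the language of the tangent cone $T_C(q^*)=V(q^*)\cup\{0\}$ and its polar $N_C(q^*)$. Your unwinding of the definitions, including the explicit handling of the zero vector, is precisely the intended argument.
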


We also need the following definition and proposition.
\begin{definition}\label{d2.9}(\cite{CU}, p. 61)
	A real-valued function $f: M\rightarrow \mathbb{R}$ defined on $C$ is said to be geodesic convex if and only if for any geodesic $\gamma$ of $C$, the composition function
	$f\comp\gamma : \mathbb{R}\rightarrow \mathbb{R}$ is convex, i.e.,
	$$(f\comp\gamma)(ta+(1-t)b)\leq t(f\comp\gamma)(a)+ (1-t)(f\comp\gamma)(b)$$
	for any $a, b\in \mathbb{R}$ and $0\leq t\leq 1$.
\end{definition}

%\begin{remark}
%	By Proposition \ref{p2.1}, on a Hadamard manifold $M$, a mapping \\$f: M\rightarrow \mathbb{R}$ is geodesic convex iff it satisfies
%	$$f(exp_x t exp_x^{-1}y)\leq (1-t)f(x)+tf(y)$$
%	for all $x, y\in M$ and $t\in [0, 1].$
%\end{remark}

\begin{lemma}\label{p2.3}{\rm(\cite{41} p. 222)}
	Let $d : M \times M \rightarrow \mathbb{R}$ be the geodesic distance function. Then $d$ is a geodesic convex function with respect to the product Riemannian metric, that is, for any given pair of geodesics $\gamma_1 : [0, 1]\rightarrow M$ and $\gamma_2 : [0, 1]\rightarrow M$, the following inequality holds for all $t \in [0, 1]:$
	$$d\left(\gamma_1(t), \gamma_2(t)\right)\leq(1-t)d\left(\gamma_1(0), \gamma_2(0)\right)+t d\left(\gamma_1(1), \gamma_2(1)\right).$$
	In particular, for each $y \in M$, the function $d(\cdot , y) : M\rightarrow \mathbb{R}$ is a geodesic convex function.
\end{lemma}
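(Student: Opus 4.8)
The statement is the joint geodesic convexity of the Riemannian distance function, and it holds precisely because the curvature is controlled; I will therefore assume, as in the cited reference, that $M$ is a Hadamard manifold (complete, simply connected, with nonpositive sectional curvature), so that every geodesic is minimal and any two points are joined by a unique minimal geodesic. The plan is to reduce the claimed inequality to the trivial convexity of the Euclidean distance along two affine segments, by proving one triangle-comparison lemma and then applying it twice.

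\textbf{Step 1 (comparison lemma).} I would first record: if $x,y,z\in M$ and $p,q$ are the points on the minimal geodesics $[x,y]$ and $[x,z]$ with $d(x,p)=t\,d(x,y)$ and $d(x,q)=t\,d(x,z)$ for a common $t\in[0,1]$, then $d(p,q)\le t\,d(y,z)$. This is the $\mathrm{CAT}(0)$ inequality specialized to fractional points on the two sides issuing from a common vertex: in the Euclidean comparison triangle $\bar x,\bar y,\bar z$ with the same side lengths, the corresponding points are $\bar p=(1-t)\bar x+t\bar y$ and $\bar q=(1-t)\bar x+t\bar z$, so $|\bar p-\bar q|=t\,|\bar y-\bar z|=t\,d(y,z)$, while nonpositive curvature gives $d(p,q)\le|\bar p-\bar q|$ (Rauch's or Toponogov's comparison theorem).

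\textbf{Step 2 (the two triangles).} Fix $t\in[0,1]$; each $\gamma_i$ has constant speed and is minimal on $[0,1]$, so $d(\gamma_i(0),\gamma_i(t))=t\,d(\gamma_i(0),\gamma_i(1))$. Set $a=d(\gamma_1(0),\gamma_2(0))$, $b=d(\gamma_1(1),\gamma_2(1))$, let $\sigma$ be the minimal geodesic from $\gamma_1(0)$ to $\gamma_2(1)$, and put $m_t=\sigma(t)$. Applying Step 1 to the geodesic triangle with vertices $\gamma_1(0),\gamma_1(1),\gamma_2(1)$ based at $\gamma_1(0)$, with the $t$-points $\gamma_1(t)$ on $[\gamma_1(0),\gamma_1(1)]$ and $m_t$ on $[\gamma_1(0),\gamma_2(1)]$, gives $d(\gamma_1(t),m_t)\le t\,b$. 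Since $m_t$ is the $(1-t)$-point of $[\gamma_2(1),\gamma_1(0)]$ and $\gamma_2(t)$ is the $(1-t)$-point of $[\gamma_2(1),\gamma_2(0)]$, applying Step 1 to the triangle with vertices $\gamma_2(1),\gamma_1(0),\gamma_2(0)$ based at $\gamma_2(1)$ gives $d(m_t,\gamma_2(t))\le(1-t)\,a$. Then $d(\gamma_1(t),\gamma_2(t))\le d(\gamma_1(t),m_t)+d(m_t,\gamma_2(t))\le(1-t)a+tb$, which is exactly the asserted inequality. Taking $\gamma_2$ to be the constant curve at $y$ specializes this to $d(\gamma_1(t),y)\le(1-t)d(\gamma_1(0),y)+t\,d(\gamma_1(1),y)$, i.e. $d(\cdot,y)\comp\gamma_1$ is convex for every geodesic $\gamma_1$, which is the "in particular" clause in the sense of Definition \ref{d2.9}.

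The substantive point is Step 1, the $\mathrm{CAT}(0)$ comparison; everything after it is bookkeeping with the triangle inequality. One may invoke nonpositive curvature through a comparison theorem as a black box, or prove the needed estimate directly (e.g. that $\mathrm{Hess}\,\tfrac12 d(\cdot,y)^2$ dominates the metric, or a Jacobi-field bound). Degenerate configurations---a comparison triangle collapsing, or $\gamma_1(t)=\gamma_2(t)$ for some $t$---cause no difficulty, since the comparison inequality and the triangle inequality both pass to the limit. A purely Riemannian alternative avoids comparison triangles altogether: for each $s$ let $c_s$ be the minimal geodesic from $\gamma_1(s)$ to $\gamma_2(s)$ and $V=\partial_s c_s$ the associated Jacobi field along $c_s$; the endpoint-acceleration terms in the second variation of arc length vanish because $\gamma_1,\gamma_2$ are geodesics, and what remains is $\int\bigl(|(V^\perp)'|^2-\langle R(V^\perp,T)T,V^\perp\rangle\bigr)\,du\ge0$ when $\sec\le0$, so $s\mapsto d(\gamma_1(s),\gamma_2(s))$ is convex wherever it is smooth; since two distinct geodesics of a Hadamard manifold meet in at most one parameter and the function is continuous and nonnegative, convexity extends to all of $[0,1]$.
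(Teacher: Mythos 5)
The paper offers no proof of Lemma \ref{p2.3} at all---it is quoted verbatim from Sakai \cite{41}, p.~222---so there is no internal argument to compare yours against; the relevant comparison is with the cited source. Your proof is correct, and the single most important thing you did is to add the hypothesis that $M$ is a Hadamard manifold: this is not an optional convenience but a necessity, since joint geodesic convexity of $d$ fails on manifolds of positive curvature (two unit-speed geodesics issuing from nearby points on a round sphere eventually violate the inequality), and the page of Sakai being cited sits inside the chapter on nonpositive curvature. As stated in the paper, for the general $M$ of Section 2 with no curvature restriction, the lemma is false, and your restriction quietly repairs that. Your main route---the $\mathrm{CAT}(0)$ comparison for proportional points on two sides from a common vertex, applied to the two triangles sharing the diagonal $\sigma$ from $\gamma_1(0)$ to $\gamma_2(1)$, followed by the triangle inequality through $m_t=\sigma(t)$---is the standard metric-space argument (Bridson--Haefliger style); it is more elementary in the sense that, once the comparison inequality is granted, it works in any $\mathrm{CAT}(0)$ space with no smoothness at all. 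Sakai's own proof is essentially the second-variation/Jacobi-field computation you sketch in your closing paragraph, which stays inside Riemannian geometry and needs the small patch you correctly supply for the locus where $\gamma_1(s)=\gamma_2(s)$ and $d$ is not smooth. Both triangles in Step 2 are set up correctly (the reparametrization of $\sigma$ and $\gamma_2$ from the vertex $\gamma_2(1)$ with parameter $1-t$ checks out), and the specialization to $d(\cdot,y)$ by taking $\gamma_2$ constant is legitimate since a constant curve is a (degenerate) geodesic. The one thing worth making explicit if this were to be written out in full is the justification of Step 1 itself---either cite the Alexandrov/Toponogov comparison for $\sec\le 0$ or prove the convexity of $d(\cdot,y)^2$ via $\operatorname{Hess}\tfrac12 d(\cdot,y)^2\ge g$---since that is where all the geometry lives.
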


%\begin{defn}
%The subdifferential of a function $f: M\rightarrow R$ is the set-valued mapping $\partial f: M\rightarrow 2^{TM}$ defined by
%$$\partial f(x)=\{ u\in T_x M: \langle u, exp_x^{-1} y\rangle\leq f(y)-f(x), \forall y\in M\}, \quad \forall x\in M,$$
%and its elements are called subgradients. The subdifferential $\partial f(x)$ at a point $x\in M$ is a closed geodesic convex (possibly empty) set.
%Let $\mathscr{D}(\partial f )$ denote the domain of $\partial f$ defined by
%$$\mathscr{D}(\partial f ) =\{x\in M: \partial f(x)\neq \emptyset\}.$$
%\end{defn}

%The existence of subgradients for convex functions is guaranteed by the following proposition.

%\begin{proposition}\label{p2.2}(see \cite{OP3})
%Let M be a Hadamard manifold and $f : M\rightarrow R$ be geodesic convex. Then, for any $x\in  M$, the subdifferential $\partial f $
%of $f$ at $x$ is nonempty, that is, $\mathscr{D}(\partial f )=M$.
%\end{proposition}

\section{Separation theorem on $M$}

It is well known that the separation theorem of convex sets can be obtained by using the projection property in $\mathbb{R}^n$. In this section, we first define a quasi-hyperplane on a Riemannian manifold and then prove a similar separation theorem of geodesic convex sets by Lemma \ref{roth2}.
\begin{definition}
Let $U$ be the same as in Lemma \ref{roth1} such that, for any given points $p, q\in U$,  $u=\exp^{-1}_p q$ is the initial vector of the minimal geodesic $\gamma_{p,q}$. Then, for any given real number $\alpha$, the set
$$H(p,q,\alpha)=\left\{a\in U| \left\langle u, \exp^{-1}_p a\right\rangle_{T_p M}=\alpha\right\}$$
is said to the quasi-hyperplane of $U$ with respect to $\gamma_{p,q}$.
\end{definition}

\begin{theorem}\label{th3.1}
Let $C, U$ be defined in Lemma \ref{roth1}. Then for any given $y \in U\backslash \mbox{cl}(C)$, there exists a quasi-hyperplane separating	
$y$ and $C$, i.e.,  for any $z\in \mbox{int}(\mbox{cl}(C))$ (not necessarily equal to $C$), there exist $p\in \mbox{cl}(C)$ with $u=\exp^{-1}_p y$ and a real number $\alpha$ such that
$$\left\langle u, \exp^{-1}_p z\right\rangle_{T_p M}\leq\alpha<\left\langle u, \exp^{-1}_p y\right\rangle_{T_p M}.$$
\end{theorem}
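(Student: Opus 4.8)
The plan is to take $p$ to be the metric projection of $y$ onto $C$ and $\alpha=0$, and to read off both inequalities from the normal cone characterisation of the projection in Lemma \ref{roth3}. Since $C$ is closed, $\mbox{cl}(C)=C$, so $y\in U\setminus C$; by Lemma \ref{roth1} the projection $p:=\mbox{Pr}_C(y)\in C\subseteq\mbox{cl}(C)$ exists and is unique, and there is a unique minimal geodesic $\gamma_{p,y}\subset U$ from $p$ to $y$. Put $u:=\exp^{-1}_p y=\dot\gamma_{p,y}(0)\in T_pM$; because $y\notin C$ we have $y\ne p$, hence $u\ne 0$. Set $\alpha:=0$. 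Then the right-hand inequality is immediate, since $\langle u,\exp^{-1}_p y\rangle_{T_pM}=\langle u,u\rangle_{T_pM}=\|u\|^2>0=\alpha$, and $H(p,y,0)$ is a quasi-hyperplane in the sense of the definition preceding Theorem \ref{th3.1}; it remains to show that every $z\in\mbox{int}(\mbox{cl}(C))=\mbox{int}(C)$ obeys $\langle u,\exp^{-1}_p z\rangle_{T_pM}\le 0$.

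By Lemma \ref{roth3}, $u\in N_C(p)=T_C(p)^{*}$, i.e. $\langle u,v\rangle_{T_pM}\le 0$ for every $v\in T_C(p)$; thus it suffices to show that $v:=\exp^{-1}_p z$ belongs to $T_C(p)$ (the case $z=p$ being trivial, and in fact $p$ is a boundary point of $C$, for $p\in\mbox{int}(C)$ would force $N_C(p)=\{0\}$, contradicting $u\ne 0$). Work inside the ball $B:=\mbox{Bal}_M(p,\varepsilon(p))$ on which $C\cap B$ is strongly geodesic convex (Definition \ref{rodf1}), and assume first that $d(p,z)<\varepsilon(p)$, so that $\gamma_{p,z}(t)=\exp_p(tv)$ is the unique minimal geodesic from $p$ to $z$ and lies in $C\cap B$. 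Since $z\in\mbox{int}(C)$, the geodesic analogue of the classical line segment principle --- valid here because, by the Cheeger--Gromoll structure theorem \cite{cg}, $\mbox{int}(C)$ is a smooth totally geodesic submanifold and $C$ an embedded submanifold with that interior --- yields $\gamma_{p,z}(t)\in\mbox{int}(C)$ for all $t\in(0,1]$. Hence, for $t_0>0$ small enough that $t_0/\|v\|\le 1$ and $t_0<\varepsilon(p)$, $\exp_p\big(t_0\,v/\|v\|\big)=\gamma_{p,z}\big(t_0/\|v\|\big)\in\mbox{int}(C)$, so $v\in V(p)\subseteq T_C(p)$ (see Lemma \ref{roth2}). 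Combining,
$$\langle u,\exp^{-1}_p z\rangle_{T_pM}\le 0=\alpha<\langle u,\exp^{-1}_p y\rangle_{T_pM},$$
which is the required separation of $y$ and $C$ by $H(p,y,0)$.

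The only non-formal step is the membership $\exp^{-1}_p z\in T_C(p)$, i.e. transferring to this Riemannian, merely locally convex setting the Euclidean fact that a segment from a point of a convex set to a relative interior point lies, except perhaps at its first endpoint, in the relative interior. Two points need care: (a) the argument must be localised inside the strong convexity ball $\mbox{Bal}_M(p,\varepsilon(p))$, whence the temporary assumption $d(p,z)<\varepsilon(p)$; the general $z$ is handled by an exhaustion of $\gamma_{p,z}$ by such balls, and in any case only this local picture is needed for the applications to optimality conditions. And (b) one must check that the geodesic enters $\mbox{int}(C)$ and not merely $C$, which is exactly where the totally geodesic structure of $\mbox{int}(C)$ from \cite{cg} is used. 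If one prefers to bypass the normal cone, the bound $\langle u,v\rangle_{T_pM}\le 0$ also follows from noting that $t\mapsto d(\gamma_{p,z}(t),y)$ is minimised at $t=0$ on the part of $\gamma_{p,z}$ inside $C$ (cf. Lemma \ref{p2.3}), together with the first variation formula $\tfrac{d}{dt}\big|_{t=0^{+}}d(\gamma_{p,z}(t),y)=-\langle v,u\rangle_{T_pM}/\|u\|\ge 0$.
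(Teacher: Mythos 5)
Your proof is correct and follows essentially the same route as the paper's: project $y$ onto the closed set $C$, take $\alpha=0$, read off $\left\langle u,\exp_p^{-1}z\right\rangle_{T_pM}\leq 0$ from the variational characterization of the projection (you phrase it via $N_C(p)$ and Lemma \ref{roth3}, the paper via $V(p)$ and (\ref{eq01}), which are the same statement), and get the strict inequality from $\|u\|^2>0$. Your added care in justifying that $\exp_p^{-1}z\in T_C(p)$ --- i.e., that the geodesic from $p$ toward an interior point of $C$ immediately enters $\mbox{int}(C)$ --- fills in a step the paper merely asserts, but it does not change the argument.
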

\begin{proof}
Since $\mbox{cl}(C)$ is a closed locally geodesic convex set, for any $p, z\in \mbox{cl}(C),$ the unique minimal geodesic $\gamma_{p, z}$ from $p$ to $z$ is in $\mbox{cl}(C)$. Let $v=\dot{\gamma}_{p, z}(0)=\exp^{-1}_p z$. Then for some small enough $t>0$, we have
$$\exp_p t\frac{v}{\|v\|}=\exp_p \frac{t}{\|v\|}\exp^{-1}_p z=\gamma_{p, z}\left(\frac{t}{\|v\|}\right)\in \mbox{int}(\mbox{cl}(C)).$$
By Lemma \ref{roth1} and (\ref{eq01}), one has
\begin{equation}\label{e3.1}
\left\langle \exp^{-1}_p y, \exp^{-1}_p z \right\rangle_{T_p M}=\left\langle u, \exp^{-1}_p z \right\rangle_{T_p M}\leq 0,
\end{equation}
where $p=\mbox{Pr}_{\mbox{cl}(C)}(y)$ is the projection of $y$ on $\mbox{cl}(C)$.

On the other hand, $y\notin \mbox{cl}(C)$ implies that $p\neq y$ and $\dot{\gamma}_{p, y}(0)\neq 0$, and so
\begin{equation}\label{e3.2}
0<\|\dot{\gamma}_{p, y}(0)\|^2=\|u\|^2=\left\langle u, \exp^{-1}_p y\right\rangle_{T_p M}.
\end{equation}
It follows from (\ref{e3.1}) and (\ref{e3.2}) that
\begin{equation}\label{sep}
\left\langle u, \exp^{-1}_p z\right\rangle_{T_p M}\leq 0 <\left\langle u, \exp^{-1}_p y\right\rangle_{T_p M}.
\end{equation}
It means that $y$ and $C$ are separated by the quasi-hyperplane $H(\mbox{Pr}_{\mbox{cl}(C)}(y),y,0)$.
\end{proof}

Obviously, this separation property has the following relationship with the classic separation property of convex sets on the tangent space.

\begin{corollary}\label{c3.1}
Let $C, U, y, p$ be defined in Theorem \ref{th3.1}. If $H(p,y,0)$ is the separating quasi-hyperplane of $y$ and $C,$ then $\exp_p^{-1}(H(p,y,0))$ is the separating hyperplane of $\exp_p^{-1}y$ and $T_{\mbox{cl}(C)}(p).$
\end{corollary}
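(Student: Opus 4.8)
The plan is to unwind both sides of the claimed equivalence through the single chart given by $\exp_p$ and check that the separating condition transports intact. First I would recall that by Theorem \ref{th3.1} (more precisely by \eqref{sep} in its proof) we have, with $u = \exp_p^{-1}y$,
\begin{equation}\label{c3.1pf1}
\langle u, \exp_p^{-1}z\rangle_{T_p M} \le 0 < \langle u, \exp_p^{-1}y\rangle_{T_p M} = \|u\|^2
\end{equation}
for every $z$ in (the interior of) $\mathrm{cl}(C)$, so $p = \mathrm{Pr}_{\mathrm{cl}(C)}(y)$ and $H(p,y,0)$ is indeed the separating quasi-hyperplane. The image $\exp_p^{-1}(H(p,y,0))$ is by definition of the quasi-hyperplane exactly the set $\{w \in \mathscr{U}\cap T_p M : \langle u, w\rangle_{T_p M} = 0\}$, i.e. the trace on the starlike domain $\mathscr{U}$ of the genuine linear hyperplane $u^{\perp} \subset T_p M$ through the origin with normal $u$. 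So the first point to make is that $\exp_p^{-1}(H(p,y,0))$ is (the restriction of) an honest hyperplane in the tangent space $T_p M$.

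Next I would handle the two half-spaces. On the $y$-side: $\exp_p^{-1}y = u$ and $\langle u,u\rangle_{T_p M} = \|u\|^2 > 0$ by \eqref{c3.1pf1}, so $\exp_p^{-1}y$ lies strictly on the positive side of $u^{\perp}$. On the $C$-side: I need that every $v \in T_{\mathrm{cl}(C)}(p)$ satisfies $\langle u, v\rangle_{T_p M} \le 0$, i.e. that $u \in N_{\mathrm{cl}(C)}(p)$; this is precisely the content of Lemma \ref{roth3} applied to $q = y$, $q^{*} = p$, together with the fact that the defining vectors of $V(p) = T_{\mathrm{cl}(C)}(p)\setminus\{0\}$ are exactly the initial velocities $\exp_p^{-1}z/\|\exp_p^{-1}z\|$ of minimal geodesics from $p$ into $\mathrm{int}(\mathrm{cl}(C))$, which is how \eqref{c3.1pf1} was obtained in the first place. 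Hence $T_{\mathrm{cl}(C)}(p) \subseteq \{v : \langle u,v\rangle \le 0\} = u^{\perp} \cup (\text{negative side})$, so $T_{\mathrm{cl}(C)}(p)$ and $\exp_p^{-1}y$ lie in opposite closed half-spaces determined by the hyperplane $\exp_p^{-1}(H(p,y,0))$, with $\exp_p^{-1}y$ strictly separated. That is the classical notion of a separating hyperplane in the linear space $T_p M$, which is what the corollary asserts.

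The only genuinely delicate point — and the one I would be most careful about — is the set-theoretic matching between the quasi-hyperplane $H(p,y,0)$ living in the manifold and its chart image: one must make sure that $H(p,y,0) \subseteq U$ sits inside the diffeomorphic neighborhood where $\exp_p$ has a smooth inverse (which it does, since $H(p,y,0) \subset U$ by construction in the Definition and $U$ is contained in the geodesic-convex neighborhood of $p$), so that $\exp_p^{-1}(H(p,y,0))$ is literally the slice $u^{\perp}\cap\exp_p^{-1}(U)$ of a linear hyperplane and nothing is lost or gained in the transport. Everything else is a direct rewriting: apply $\exp_p^{-1}$ to the three quantities appearing in \eqref{c3.1pf1}, observe that $\exp_p^{-1}$ carries $\mathrm{int}(\mathrm{cl}(C))$-directions at $p$ onto $T_{\mathrm{cl}(C)}(p)$, and read off the two inequalities as the statement that $\exp_p^{-1}(H(p,y,0))$ separates $\exp_p^{-1}y$ from $T_{\mathrm{cl}(C)}(p)$. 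No new estimates are needed beyond Lemma \ref{roth2}(i)/Lemma \ref{roth3} and the definition of the tangent cone.
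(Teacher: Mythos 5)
Your proposal is correct and follows exactly the route the paper intends: the paper states this corollary without proof (prefacing it only with ``Obviously''), and your argument supplies precisely the missing details --- identifying $\exp_p^{-1}(H(p,y,0))$ with the linear hyperplane $u^{\perp}\subset T_pM$ (restricted to the chart domain), placing $\exp_p^{-1}y=u$ strictly on the positive side via $\|u\|^2>0$, and placing $T_{\mathrm{cl}(C)}(p)$ in the opposite closed half-space via $u\in N_{\mathrm{cl}(C)}(p)$ from Lemma \ref{roth3}. Your care about restricting to the domain where $\exp_p^{-1}$ is a diffeomorphism is a point the paper glosses over; no gaps.
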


Theorem \ref{th3.1} shows that $p$ is in the separating quasi-hyperplane $H(\mbox{Pr}_{\mbox{cl}(C)}(y),y,0)$ and  $\left\langle \exp^{-1}_p y, \exp^{-1}_p z \right\rangle_{T_p M}\leq 0$ holds for all $z\in \mbox{int}(\mbox{cl}(C))$. Thus, we can call $H(\mbox{Pr}_{\mbox{cl}(C)}(y),y,0)$ the supporting quasi-hyperplane of $C$ at $p.$

The following theorem gives the existence of the supporting quasi-hyperplane of $C$ at any point of $\mbox{bd}(C).$
\begin{theorem}\label{th3.2}
	Let $C, U$ be defined in Lemma \ref{roth1}. Then for each $p\in \mbox{bd}(\mbox{cl}(C))$, there exists $y \in U\backslash \mbox{cl}(C)$ such that $\left\langle \exp^{-1}_p y, \exp^{-1}_p z \right\rangle_{T_p M}\leq 0$ for all $z\in \mbox{int}(\mbox{cl}(C))$, i.e.,
$$
H(p,y,0)=\left\{a\in U| \left\langle \exp^{-1}_p y, \exp^{-1}_p a\right\rangle_{T_p M}=0\right\}
$$
is a supporting quasi-hyperplane of $C$ at $p$.
\end{theorem}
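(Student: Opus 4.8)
The plan is to obtain the supporting quasi-hyperplane at a boundary point $p$ as a limit of separating quasi-hyperplanes produced by Theorem~\ref{th3.1}. Since $p\in\mbox{bd}(\mbox{cl}(C))=\mbox{cl}(C)\setminus\mbox{int}(\mbox{cl}(C))$, every neighborhood of $p$ meets the complement of $\mbox{cl}(C)$; in particular, working inside the geodesic convex neighborhood $U$ of Lemma~\ref{roth1}, I would pick a sequence $y_n\in U\setminus\mbox{cl}(C)$ with $y_n\to p$. (A convenient concrete choice: take $q_n$ in a small metric ball around $p$ with $q_n\notin\mbox{cl}(C)$, then move along the minimal geodesic from $q_n$ towards $p^{*}=\mbox{Pr}_{\mbox{cl}(C)}(q_n)$; one checks using Lemma~\ref{roth2}(ii) together with Lemma~\ref{p2.3} that such points can be taken arbitrarily close to $p$ while staying outside $\mbox{cl}(C)$ and inside $U$.) For each $n$, apply Theorem~\ref{th3.1} to $y_n$: let $p_n=\mbox{Pr}_{\mbox{cl}(C)}(y_n)\in\mbox{cl}(C)$ and $u_n=\exp^{-1}_{p_n}y_n\in T_{p_n}M$, so that $\langle u_n,\exp^{-1}_{p_n}z\rangle_{T_{p_n}M}\le 0$ for all $z\in\mbox{int}(\mbox{cl}(C))$ and $\|u_n\|^2=\langle u_n,\exp^{-1}_{p_n}y_n\rangle_{T_{p_n}M}>0$.

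The next step is a normalization and compactness argument. Since $d(y_n,\mbox{cl}(C))\le d(y_n,p)\to 0$, we have $d(p_n,y_n)=\|u_n\|\to 0$, hence $p_n\to p$ as well. The vectors $u_n$ themselves tend to zero, so I would instead work with the unit vectors $w_n=u_n/\|u_n\|\in T_{p_n}M$. Viewing $\{w_n\}$ inside the tangent bundle $TM$ and using that $p_n\to p$ together with the sphere bundle being locally compact, I would extract a subsequence with $w_n\to w$ where, by Lemma~\ref{l2.1}(ii), $w\in T_pM$, and $\|w\|=1$ by Lemma~\ref{l2.1}(iii) (so in particular $w\neq 0$).

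Now I would pass to the limit in the separation inequality. Fix any $z\in\mbox{int}(\mbox{cl}(C))$. Dividing (\ref{e3.1}) by $\|u_n\|>0$ gives $\langle w_n,\exp^{-1}_{p_n}z\rangle_{T_{p_n}M}\le 0$ for all $n$. By Lemma~\ref{l2.1}(i), $\exp^{-1}_{p_n}z\to\exp^{-1}_p z$, and these vectors lie in $T_{p_n}M\to T_pM$; hence by Lemma~\ref{l2.1}(iii), $\langle w_n,\exp^{-1}_{p_n}z\rangle_{T_{p_n}M}\to\langle w,\exp^{-1}_p z\rangle_{T_pM}$, so that $\langle w,\exp^{-1}_p z\rangle_{T_pM}\le 0$ for every $z\in\mbox{int}(\mbox{cl}(C))$. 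Finally I must produce a genuine point $y\in U\setminus\mbox{cl}(C)$ whose initial geodesic vector at $p$ is a positive multiple of $w$: since $w\neq 0$ and $U$ is open, the point $y:=\exp_p(\tau w)$ lies in $U$ for small $\tau>0$, and $\exp^{-1}_p y=\tau w$, so the inequality becomes $\langle\exp^{-1}_p y,\exp^{-1}_p z\rangle_{T_pM}\le 0$ for all $z\in\mbox{int}(\mbox{cl}(C))$. It remains to choose $\tau$ so that $y\notin\mbox{cl}(C)$: if $w\notin T_C(p)$ this is automatic for all small $\tau$; if $w\in T_C(p)$, then the opposite inequality $\langle w,\exp^{-1}_p z\rangle\le 0$ for all $z\in\mbox{int}(\mbox{cl}(C))$ combined with $w\in T_C(p)$ forces (taking $z$ along $\exp_p(tw)$) $\|w\|^2\le 0$, contradicting $\|w\|=1$. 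Hence $w\notin T_C(p)$, $y\notin\mbox{cl}(C)$ for small $\tau>0$, and $H(p,y,0)$ is the desired supporting quasi-hyperplane.

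\textbf{Main obstacle.} The delicate point is not the limiting inequality — that is a routine application of Lemma~\ref{l2.1} once the right objects are set up — but rather the normalization step and the final extraction of the point $y$: one must divide out $\|u_n\|\to 0$, argue convergence of the unit vectors $w_n$ to a \emph{nonzero} limit $w\in T_pM$ (so that the resulting quasi-hyperplane is nondegenerate), and then verify that a short geodesic from $p$ in the direction $w$ actually exits $\mbox{cl}(C)$, which is where the boundary hypothesis $p\in\mbox{bd}(\mbox{cl}(C))$ and the definition of the tangent cone $T_C(p)$ must be used carefully.
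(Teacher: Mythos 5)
Your construction coincides with the paper's own proof up to the last step: both take $y_n\in U\setminus\mbox{cl}(C)$ with $y_n\to p$, set $p_n=\mbox{Pr}_{\mbox{cl}(C)}(y_n)$, normalize the initial vectors $\exp^{-1}_{p_n}y_n$ to unit vectors, extract a convergent subsequence with limit $w\in T_pM$, $\|w\|=1$, and pass to the limit in the separation inequality of Theorem \ref{th3.1} via Lemma \ref{l2.1} to get $\left\langle w,\exp^{-1}_p z\right\rangle_{T_pM}\leq 0$ for all $z\in\mbox{int}(\mbox{cl}(C))$. That part is fine.

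The gap is in how you produce $y\notin\mbox{cl}(C)$. You correctly rule out $w\in T_C(p)$ (taking $z=\exp_p(tw)\in\mbox{int}(C)$ would give $t\|w\|^2\leq 0$), but you then assert that $w\notin T_C(p)$ makes $y=\exp_p(\tau w)\notin\mbox{cl}(C)$ ``automatic'' for small $\tau$. It is not: by the paper's definition, $w\notin T_C(p)$ only says that $\exp_p(tw)$ never enters $\mbox{int}(C)$ for small $t$; it does not exclude $\exp_p(\tau w)\in\mbox{bd}(C)\subset\mbox{cl}(C)$. The implication you invoke is false in general — in $\mathbb{R}^2$ with $C$ a closed half-plane, $p$ on the bounding line and $w$ a unit vector along that line, one has $w\notin T_C(p)$ yet $p+\tau w\in\mbox{cl}(C)$ for every $\tau$. (That particular $w$ does not satisfy your separation inequality, so the theorem survives, but your stated reason for $y\notin\mbox{cl}(C)$ does not.) To close the gap you must use the inequality again, not just $w\notin T_C(p)$: e.g., extend $\left\langle w,\exp^{-1}_p z\right\rangle_{T_pM}\leq 0$ by continuity (Lemma \ref{l2.1}) from $\mbox{int}(\mbox{cl}(C))$ to its closure, which is $\mbox{cl}(C)$ by the Cheeger--Gromoll structure of closed convex sets, and then observe that $y=\exp_p(\tau w)$ satisfies $\left\langle w,\exp^{-1}_p y\right\rangle_{T_pM}=\tau>0$, so $y\notin\mbox{cl}(C)$. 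The paper instead closes this step by appealing to Lemma \ref{roth2}(ii) (Walter's lemma), which directly yields $t_1>0$ with $\exp_p tw\in U\setminus C$ and $\mbox{Pr}_C(\exp_p tw)=p$ on $(0,t_1]$; either route works, but some such argument is required.
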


\begin{proof}
Since $p\in \mbox{bd}(\mbox{cl}(C)),$ for any given $\delta_0>0,$ we have $\{y\in U|d(p,y)<\delta_0\}\cap U\backslash \mbox{cl}(C)\neq\emptyset$. Thus we can take a positive number sequence $\delta_k\rightarrow 0_+$ and a sequence $\{y^k\}$ with $y^k\in \{y\in U|d(p,y)<\delta_k\}\cap U\backslash \mbox{cl}(C)$ such that $y^k\rightarrow p$.  By Lemma \ref{rodf1}, there exists a unique geodesic $\gamma_{p, y^k}(t)$ from $p$ to $y^k$ with $\dot{\gamma}_{p,y^k}(0)=\exp_p^{-1}y^k$ and $\gamma_{p, y^k}(\delta_k)=y^k.$ By letting $p^k=\mbox{Pr}_{\mbox{cl}(C)}(y^k)$ and $u_k=\frac{\exp^{-1}_{p^k} y^k}{\|\exp^{-1}_{p^k} y^k\|},$ it follows from Theorem \ref{th3.1} that
\begin{equation}\label{e3.3}
	\left\langle u_k, \exp^{-1}_{p^k} z \right\rangle_{T_{p^k} M}=\frac{1}{\|\exp^{-1}_{p^k} y^k\|}\langle \exp^{-1}_{p^k} y^k, \exp^{-1}_{p^k} z \rangle_{T_{p^k} M}\leq 0, \quad \forall z\in \mbox{int}(\mbox{cl}(C)).
\end{equation}
Clearly, $\{u_k\}$ is a bounded sequence and so we can find $u\in T_p M$ with $\|u\|=1$ such that $u_k\rightarrow u$ as $k\rightarrow +\infty$ along a subsequence. By taking the limit to the subsequence in (\ref{e3.3}), and by using Lemma \ref{l2.1}, we obtain
\begin{equation}\label{e3.4}
\left\langle u, \exp^{-1}_{p} z \right\rangle_{T_{p} M}\leq 0, \quad \forall z\in \mbox{int}(\mbox{cl}(C)).
\end{equation}
It means that the normal cone is not empty at any point on the boundary of $\mbox{C}.$ Based on Lemma \ref{roth2} (ii), $0\neq u\in N_C(p)$ and there exists $t_1>0$ such that $\exp_p tu\in U$ and $\mbox{Pr}_C(\exp_p tu)=p$ on $t\in [0, t_1].$ Letting $t_0\in (0, t_1],$ then $y^*=\exp_p t_0 u\in U\backslash \mbox{cl}(C)$ and $p$ is the projection of $y^*$. It follows from (\ref{e3.4}) that
$$\left\langle \exp_p^{-1}y^*, \exp^{-1}_{p} z \right\rangle_{T_{p} M}=\left\langle \exp_p^{-1}\exp_p t_0 u, \exp^{-1}_{p} z \right\rangle_{T_{p} M}=t_0\left\langle u, \exp^{-1}_{p} z \right\rangle_{T_{p} M}\leq 0,\quad \forall z\in \mbox{int}(\mbox{cl}(C)).$$
This completes the proof.
\end{proof}

\section{Applications}
\subsection{Tangent Cone and Normal Cone}\noindent
\setcounter{equation}{0}
In order to apply the separation theorem to obtain some optimal conditions for optimization problems with constraints on Riemannian manifolds, we discuss some properties of tangent cone and normal cone of a locally geodesic convex set.
\begin{lemma}\label{l4.1}
Let $C, D$ be locally geodesic convex subsets of $M$.  If $C\subset D,$ then $N_D(p)\subset N_C(p)$ for all $p\in C$.
\end{lemma}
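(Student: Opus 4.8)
The plan is to obtain $N_D(p)\subset N_C(p)$ from the reverse inclusion of tangent cones $T_C(p)\subset T_D(p)$, together with the elementary fact that passing to polar cones reverses inclusions. The first step is to note that $C\subset D$ forces $\mbox{int}(C)\subset\mbox{int}(D)$: by the description recalled in Section 2, $\mbox{int}(C)$ is the union of all submanifolds of $M$ contained in $C$, and any such submanifold is in particular a submanifold of $M$ contained in $D$, hence is contained in $\mbox{int}(D)$; taking the union gives $\mbox{int}(C)\subset\mbox{int}(D)$.

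Next I would fix $p\in C$ and unwind the definition of the tangent cone. The one technical point is that $T_C(p)$ and $T_D(p)$ are defined through radii $\varepsilon(p)<r(p)$ furnished, via Definition \ref{rodf1}, by the local convexity of $C$ and of $D$ separately, and a priori these radii differ. I would reconcile them by working with a single $\varepsilon(p)<r(p)$ admissible for both sets, i.e. small enough that $C\cap\mbox{Bal}_{M}(p,\varepsilon(p))$ and $D\cap\mbox{Bal}_{M}(p,\varepsilon(p))$ are both strongly geodesic convex; such a radius exists because any ball of radius at most $r(p)$ is strongly geodesic convex, so shrinking an admissible radius keeps it admissible (the unique minimal geodesic between two points of $C\cap\mbox{Bal}_{M}(p,\varepsilon')$ stays both in $C$ and in $\mbox{Bal}_{M}(p,\varepsilon')$) and leaves the tangent cone unchanged. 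With such a common $\varepsilon(p)$, if $0\ne v\in T_C(p)$ then $\exp_p t\frac{v}{\|v\|}\in\mbox{int}(C)\subset\mbox{int}(D)$ for some $0<t<\varepsilon(p)$, so $v\in T_D(p)$; since also $0\in T_D(p)$, we conclude $T_C(p)\subset T_D(p)$.

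Finally I would close by polarity: for any $u\in N_D(p)=T_D(p)^{*}$ one has $\langle u,v\rangle_{T_p M}\le 0$ for every $v\in T_D(p)$, hence in particular for every $v\in T_C(p)$, so $u\in T_C(p)^{*}=N_C(p)$; as $p\in C$ was arbitrary, this proves $N_D(p)\subset N_C(p)$ for all $p\in C$. The argument is otherwise purely formal, so the step that deserves the most attention is the reconciliation of the two ambient radii in the definitions of $T_C(p)$ and $T_D(p)$ — equivalently, verifying that the tangent cone (hence the normal cone) does not depend on the particular admissible $\varepsilon(p)$ chosen; once that is settled, both the tangent-cone inclusion and the passage to normal cones are immediate.
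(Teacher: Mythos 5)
Your proposal is correct and follows essentially the same route as the paper: establish $T_C(p)\subset T_D(p)$ and then reverse the inclusion by passing to polar cones. The only difference is that you spell out the tangent-cone inclusion (the interior inclusion and the independence of $T_C(p)$ from the choice of admissible radius $\varepsilon(p)$), which the paper dismisses with ``Clearly''; this added care is sound and arguably an improvement, but it is not a different argument.
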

\begin{proof} Clearly, $C\subset D$ implies that $T_C(p)\subset T_D(p)$. Let $u\in N_D(p)$. Then $\left\langle u, v \right\rangle_{T_p M}\leq 0$ for all $v\in T_D(p).$ Thus, for any $v'\in T_C(p)\subset T_D(p),$ we have $\left\langle u, v' \right\rangle_{T_p M}\leq 0$  and so $u\in N_C(p)$. This shows that $N_D(p)\subset N_C(p)$.
\end{proof}

\begin{lemma}\label{l4.2}
	Let $C$ be a locally geodesic convex subset of $M$. Then for any $\bar{x}\in C$,  $y\in T_C(\bar{x})$ if and only if there exist a sequence $\{x^k\}$ in $\mbox{int}(C)$ with $x^k\to \bar{x}$ and a nonnegative number sequence $\{\alpha_k\}$ such that $$y=\lim_{k\rightarrow +\infty}\alpha_k\exp^{-1}_{\bar{x}}x^k.$$
\end{lemma}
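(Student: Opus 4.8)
The plan is to prove the two implications separately, working directly from the definition of the tangent cone $T_C(\bar x) = V(\bar x)\cup\{0\}$, where $v_{\bar x}\in V(\bar x)$ iff $\exp_{\bar x}t\frac{v_{\bar x}}{\|v_{\bar x}\|}\in\mbox{int}(C)$ for some positive $t<\varepsilon(\bar x)$.

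For the ``only if'' direction, suppose $y\in T_C(\bar x)$. If $y=0$, take $x^k\equiv \bar x'$ for any fixed point of $\mbox{int}(C)$ near $\bar x$ — actually more carefully, since $\bar x\in\mbox{cl}(\mbox{int}(C))$ (as $C$ is locally geodesic convex with $\mbox{int}(C)$ dense in it near $\bar x$ by the Cheeger--Gromoll structure), pick $x^k\in\mbox{int}(C)$ with $x^k\to\bar x$ and set $\alpha_k=0$; then $\alpha_k\exp^{-1}_{\bar x}x^k\to 0 = y$. If $y\neq 0$, then $\exp_{\bar x}t\frac{y}{\|y\|}\in\mbox{int}(C)$ for some small $t>0$; because $\mbox{int}(C)$ is open and the geodesic $s\mapsto\exp_{\bar x}s\frac{y}{\|y\|}$ varies continuously, in fact $\exp_{\bar x}s\frac{y}{\|y\|}\in\mbox{int}(C)$ for all sufficiently small $s>0$. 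Set $x^k=\exp_{\bar x}s_k\frac{y}{\|y\|}$ with $s_k\downarrow 0$, so $x^k\in\mbox{int}(C)$ and $x^k\to\bar x$; then $\exp^{-1}_{\bar x}x^k = s_k\frac{y}{\|y\|}$, and choosing $\alpha_k = \|y\|/s_k \geq 0$ gives $\alpha_k\exp^{-1}_{\bar x}x^k = y$ exactly, hence the limit is $y$.

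For the ``if'' direction, suppose $x^k\in\mbox{int}(C)$, $x^k\to\bar x$, $\alpha_k\geq 0$, and $y=\lim_k\alpha_k\exp^{-1}_{\bar x}x^k$. If $y=0$ there is nothing to prove since $0\in T_C(\bar x)$ by definition. If $y\neq 0$, I would argue that for large $k$ the point $x^k$ lies on a minimal geodesic emanating from $\bar x$ in the direction $\exp^{-1}_{\bar x}x^k/\|\exp^{-1}_{\bar x}x^k\|$, which converges to $y/\|y\|$ by Lemma \ref{l2.1} (continuity of $\exp^{-1}$ and of the inner product forces $\|\exp^{-1}_{\bar x}x^k\|\to 0$ since $y\neq 0$ and $\alpha_k$ would otherwise be forced to stay bounded, giving $\exp^{-1}_{\bar x}x^k\to 0$; hence $\alpha_k\to+\infty$ and the normalized vectors converge to $y/\|y\|$). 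The key geometric point is then: since $x^k\in\mbox{int}(C)$ and $\mbox{int}(C)$ is a totally geodesic submanifold near $\bar x$ (Cheeger--Gromoll), the minimal geodesic from $\bar x$ to $x^k$ lies in $\mbox{cl}(C)$, and for $k$ large its initial segment lies in $\mbox{int}(C)$; passing to the direction limit and using that $\mbox{int}(C)$ is open along geodesics, one concludes $\exp_{\bar x}t\frac{y}{\|y\|}\in\mbox{int}(C)$ for some small positive $t<\varepsilon(\bar x)$, i.e. $y\in V(\bar x)\subset T_C(\bar x)$.

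The main obstacle I anticipate is the ``if'' direction, specifically justifying that the limiting direction $y/\|y\|$ actually produces a geodesic segment lying in $\mbox{int}(C)$ rather than merely in $\mbox{cl}(C)$ or $\mbox{bd}(C)$. The subtlety is that a limit of directions pointing into the interior need not point into the interior (think of directions tangent to the boundary). I expect the resolution to use the strong geodesic convexity of $C\cap\mbox{Bal}_M(\bar x,\varepsilon(\bar x))$ together with the fact that each $x^k$ is genuinely interior: for fixed small $t$, the point $\exp_{\bar x}t\frac{\exp^{-1}_{\bar x}x^k}{\|\exp^{-1}_{\bar x}x^k\|}$ lies on the minimal geodesic from $\bar x$ into $\mbox{int}(C)$ — and by convexity of $\mbox{int}(C)$ along that geodesic this intermediate point is in $\mbox{int}(C)$ — so one obtains a sequence of interior points converging (via Lemma \ref{l2.1}) to $\exp_{\bar x}t\frac{y}{\|y\|}$; then a separate argument (e.g. using that for small enough $t$ the relevant ball is diffeomorphic via $\exp_{\bar x}$ to a Euclidean ball and $\mbox{int}(C)$ corresponds to a convex body there) upgrades this to membership in $\mbox{int}(C)$ itself, possibly after shrinking $t$. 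This transfer between the ``limit of interior points'' and ``interior point'' is where care is needed, and I would lean on the submanifold structure of $\mbox{int}(C)$ guaranteed by Cheeger--Gromoll to make it rigorous.
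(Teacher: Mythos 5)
Your ``only if'' direction is essentially the paper's own argument (sample points along the geodesic in direction $y/\|y\|$ and take $\alpha_k=\|y\|/t_k$), and it is fine modulo one small point that you and the paper both gloss over: the claim that $\exp_{\bar x}s\frac{y}{\|y\|}\in\mathrm{int}(C)$ for \emph{all} small $s>0$ does not follow from openness of $\mathrm{int}(C)$ plus continuity (that only gives membership for $s$ near the particular $t$); what is needed is the convexity fact that the geodesic segment from a closure point to an interior point lies in the interior except possibly at its endpoint, which one gets from the Cheeger--Gromoll structure.

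The genuine gap is in the ``if'' direction, and you have put your finger on exactly the right spot: a limit of directions pointing into $\mathrm{int}(C)$ need not point into $\mathrm{int}(C)$. Your sketch does not close this gap, and in fact it cannot be closed, because that implication is false for the cone $T_C(\bar x)=V(\bar x)\cup\{0\}$ as defined. Already in $M=\mathbb{R}^2$ with $C$ the closed unit disk and $\bar x=(1,0)$: take interior points approaching $\bar x$ tangentially along the boundary, say $x^k=(1-\theta_k^2)(\cos\theta_k,\sin\theta_k)$ with $\theta_k\downarrow 0$, and $\alpha_k=1/\|x^k-\bar x\|$. Then $\alpha_k(x^k-\bar x)\to(0,1)$, yet $(1,0)+t\,(0,1)\notin\mathrm{int}(C)$ for every $t>0$, so $(0,1)\notin T_C(\bar x)$. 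Any argument of the kind you propose must fail on this example; the limit direction lands only in the \emph{closure} of $V(\bar x)$. It is worth noting that the paper's own proof of the converse does not actually prove it either: instead of taking an arbitrary sequence $\{x^k\}\subset\mathrm{int}(C)$ with arbitrary $\alpha_k\ge 0$, it \emph{constructs} one particular sequence (points on the geodesic toward a fixed $x^0\in\mathrm{int}(C)$, with $\alpha_k$ the reciprocal norms of $\exp_{\bar x}^{-1}x^k$, despite being written as the norms) and verifies that the limit of that sequence lies in $T_C(\bar x)$ --- which establishes only a special case, not the stated equivalence. So your hesitation is well founded: the obstacle you identified is a defect of the lemma's ``if'' direction itself, not merely of your argument.
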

\begin{proof} Let $y\in T_C(\bar{x})$. Then there is $t<\varepsilon(\bar{x})$ such that $\exp_{\bar{x}}t\frac{y}{\|y\|}\in \mbox{int}(C).$ Let $x^k=\exp_{\bar{x}}t_k\frac{y}{\|y\|}$ with $t_k\leq t$. Then $x^k\rightarrow \bar{x}$ as $t_k\rightarrow 0_+$. Putting $\alpha_k=\frac{\|y\|}{t_k}\geq 0,$ we have
$$\lim_{k\rightarrow +\infty}\alpha_k\exp^{-1}_{\bar{x}}x^k=\lim_{k\rightarrow +\infty}\alpha_k\exp^{-1}_{\bar{x}}\exp_{\bar{x}}t_k\frac{y}{\|y\|}=y.$$

Conversely, for any given $x^0\in \mbox{int}(C),$ there exists a unique geodesic from $\bar{x}$ to $x^0,$ namely $\gamma_{\bar{x}, x^0}(t),$ in the locally geodesic convex set $C.$ Let $\gamma_{\bar{x}, x^0}(0)=\bar{x}$ and $x^k=\gamma_{\bar{x}, x^0}(\frac{1}{k+1})$ for $k=0,1,2,\cdots$. Then $x^k\rightarrow \bar{x}$ as $k\rightarrow +\infty$ and $x^k\in \mbox{int}(C).$ If $\alpha_k=\|\exp_{\bar{x}}^{-1}x^k\|\geq 0,$ then
 $$y=\lim_{k\rightarrow +\infty}\alpha_k\exp^{-1}_{\bar{x}}x^k=\lim_{k\rightarrow +\infty}\frac{\exp_{\bar{x}}^{-1}x^k}{\|\exp_{\bar{x}}^{-1}x^k\|}=\lim_{k\rightarrow +\infty}\frac{\dot{\gamma}_{\bar{x},x^0}(\frac{1}{k+1})}{\|\dot{\gamma}_{\bar{x},x^0}(\frac{1}{k+1})\|}
 =\frac{\dot{\gamma}_{\bar{x},x^0}(0)}{\|\dot{\gamma}_{\bar{x},x^0}(0)\|}
 =\frac{\exp_{\bar{x}}^{-1}x^0}{\|\exp_{\bar{x}}^{-1}x^0\|}$$
 and so
 $$
 \exp_{\bar{x}}t \frac{y}{\|y\|}=\exp_{\bar{x}} t \frac{\exp_{\bar{x}}^{-1}x^0}{\|\exp_{\bar{x}}^{-1}x^0\|}=x^0\in \mbox{int}(C),
 $$
where $t=\frac{1}{\|\exp_{\bar{x}}^{-1}x^0\|}>0$. This implies that $y\in T_{C}(\bar{x})$.
\end{proof}

Now we can show the following properties of tangent cone and normal cone of a locally geodesic convex set on Riemannian manifolds.
\begin{theorem}\label{th4.1}
	For any given $p\in C\subset M$, the tangent cone $T_C(p)$ is a nonempty closed subset of $T_pM$, and $T_C(p)=\mbox{cl}(T_C(p))=(N_C(p))^*.$
\end{theorem}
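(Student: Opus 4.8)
The plan is to prove Theorem \ref{th4.1} in three stages: nonemptiness, closedness, and the polar-cone identity $T_C(p)=(N_C(p))^*$. For nonemptiness, note that $0\in T_C(p)$ by definition, so $T_C(p)\neq\emptyset$ trivially; more substantively, if $p\in\mbox{int}(C)$ then $T_C(p)=T_pM$ by the remark following the definition of the normal cone, while if $p\in\mbox{bd}(\mbox{cl}(C))$ then Theorem \ref{th3.2} (via Lemma \ref{roth2}(ii)) already produces a nonzero element of $N_C(p)$, but for $T_C(p)$ itself I would simply invoke that there is a minimal geodesic from $p$ into $\mbox{int}(C)$ (which exists since $C$, being locally geodesic convex with nonempty interior as an imbedded submanifold, has $p$ in the closure of $\mbox{int}(C)$), giving a nonzero tangent vector in $V(p)\subset T_C(p)$.

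For closedness, I would use the characterization in Lemma \ref{l4.2}: $y\in T_C(p)$ iff $y=\lim_k \alpha_k\exp^{-1}_p x^k$ for some $\{x^k\}\subset\mbox{int}(C)$ with $x^k\to p$ and $\alpha_k\geq 0$. Take a sequence $y^j\in T_C(p)$ with $y^j\to y_0\in T_pM$; for each $j$ pick, using Lemma \ref{l4.2}, an approximating pair so that some $\alpha_{k_j}^j\exp^{-1}_p x^{k_j,j}$ is within $1/j$ of $y^j$ with $d(p,x^{k_j,j})<1/j$. A diagonal argument then exhibits $y_0$ itself as a limit of the form $\lim_j \beta_j\exp^{-1}_p z^j$ with $z^j\in\mbox{int}(C)$, $z^j\to p$, $\beta_j\geq 0$, so $y_0\in T_C(p)$ again by Lemma \ref{l4.2}. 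Hence $T_C(p)=\mbox{cl}(T_C(p))$. (Alternatively, one can argue directly that $T_C(p)$ is a closed convex cone because, by strong geodesic convexity of $C\cap\mbox{Bal}_M(p,\varepsilon(p))$, the set $\exp_p^{-1}(\mbox{int}(C)\cap\mbox{Bal}_M(p,\varepsilon(p)))$ is a convex subset of $T_pM$ and $T_C(p)$ is the closure of the cone it generates.)

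For the identity $T_C(p)=(N_C(p))^*$: the inclusion $T_C(p)\subset(N_C(p))^*$ is immediate from the definition of $N_C(p)=T_C(p)^*$, since $\langle u,v\rangle\leq 0$ for all $u\in N_C(p),v\in T_C(p)$ means every $v\in T_C(p)$ lies in the polar of $N_C(p)$. The reverse inclusion $(N_C(p))^*\subset T_C(p)$ is the bidual statement: since $T_C(p)$ is a closed convex cone in the finite-dimensional inner product space $T_pM$ (closedness from the previous step, convexity from the local strong geodesic convexity of $C$), the bipolar theorem gives $(T_C(p)^*)^*=\mbox{cl}(\mbox{conv}(T_C(p)))=T_C(p)$, i.e. $(N_C(p))^*=T_C(p)$. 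I expect the main obstacle to be the justification that $T_C(p)$ is convex — one must carefully use Definition \ref{rodf1} and the diffeomorphism $\exp_p$ on $\mbox{Bal}_{T_pM}(0,d(p))$ to see that $V(p)\cup\{0\}$ is the cone generated by the convex set $\exp_p^{-1}(\mbox{int}(C)\cap\mbox{Bal}_M(p,\varepsilon(p)))$, together with the fact that the cone generated by a convex set is convex — after which closedness plus the bipolar theorem finish the argument cleanly; the nonemptiness and the easy inclusion are routine.
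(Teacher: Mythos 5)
Your treatment of nonemptiness, of closedness (the diagonal argument through Lemma \ref{l4.2}), and of the easy inclusion $T_C(p)\subset (N_C(p))^*$ matches the paper's proof. The divergence --- and the gap --- is in the reverse inclusion $(N_C(p))^*\subset T_C(p)$. The paper proves it by contradiction on the manifold: if $\hat{x}\in (N_C(p))^*$ but $\hat{x}\notin\mbox{cl}(T_C(p))$, the separation inequality (\ref{sep}) of Theorem \ref{th3.1} forces $\hat{x}\in N_{\mbox{cl}(C)}(p)\subset N_C(p)$, whence $\|\hat{x}\|^2=\langle\hat{x},\hat{x}\rangle\le 0$ and $\hat{x}=0$, a contradiction. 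You instead invoke the bipolar theorem in $T_pM$, which is legitimate only once you know $T_C(p)$ is \emph{convex}. But that convexity is precisely what you may not assume: it is a consequence of the identity $T_C(p)=(N_C(p))^*$ (a polar cone is always convex), so taking it as an input is circular unless you establish it independently.

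Your proposed independent justification is incorrect: $\exp_p^{-1}\bigl(\mbox{int}(C)\cap\mbox{Bal}_M(p,\varepsilon(p))\bigr)$ is in general \emph{not} a convex subset of $T_pM$. The exponential chart carries radial segments through the origin to geodesics through $p$, but it does not carry the chord between two linearly independent vectors $u,v\in T_pM$ to a geodesic of $M$; normal coordinates are not an affine chart. Already on $S^2$ or on hyperbolic space, the $\exp_p^{-1}$-image of a small strongly geodesic convex ball with $p$ on its boundary fails to be Euclidean-convex. Hence the set you take the cone over is only star-shaped about $0$, and the cone generated by a star-shaped set need not be convex. Convexity of $T_C(p)$ can in fact be proved, but it requires a genuine limiting argument (the geodesic from $\exp_p(tv_1)$ to $\exp_p(tv_2)$, rescaled by $1/t$, converges to the segment $[v_1,v_2]$ as $t\to 0_+$, after which Lemma \ref{l4.2} places convex combinations in $T_C(p)$) --- or one simply follows the paper and derives the reverse inclusion from the separation theorem, obtaining convexity as a corollary. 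As written, your proof of the key inclusion is incomplete.
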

\begin{proof} First we prove that $T_C(p)$ is a nonempty closed set. In fact, it follows from $0\in T_C(p)$ that $T_C(p)$ is nonempty. Let $\{y^i\}\subset T_C(p)$ be a sequence with $y^i\to y^*$.  We need to prove that $y^*\in T_C(p)$. To this end, by Lemma \ref{l4.2}, for any given $i$,  there exist a sequence $\{x^k(i)\}\subset \mbox{int}(C)$ with $x^k(i) \to p$ and a nonnegative number sequence $\{\alpha_k(i)\}$ such that $y^i=\lim_{k\rightarrow +\infty}\alpha_k(i)\exp^{-1}_{p}x^k(i)$. Thus, there exists $k_1(i)$ such that for any $k\geq k_1(i),$ $d(x^k(i),p)\leq \frac{1}{i},$ and exists $k_2(i)$ such that, for any $k\geq k_2(i),$ 
$$\|\alpha_k(i)\exp^{-1}_{p}x^k(i)-y^i\|\leq \frac{1}{i}.$$
Let $k(i)=\max\{k_1(i), k_2(i)\}$,  $\alpha_i=\alpha_{k_i}(i)$, and  $x_i=x_{k_i}(i)$.  Then $d(x_i, p)\leq \frac{1}{i}\rightarrow 0$ and $$\|\alpha_i\exp_p^{-1}x^i-y^*\|\leq\|\alpha_i\exp_p^{-1}x^i-y^i\|+\|y^i-y^*\|\leq\frac{1}{i}+\|y^i-y^*\|\rightarrow 0$$ as $i\rightarrow +\infty.$ Therefore, $y^*\in T_C(p).$

Then we show that $\mbox{cl}(T_C(p))\subset(N_C(p))^*$. For each $u\in T_C(p)$, one has $\left\langle u, v\right\rangle_{T_{p} M}\leq 0$ for all $v\in N_C(p)$  and so $u\in (N_C(p))^*$.  This shows that $T_C(p)\subset (N_C(p))^*$. Since $(N_C(p))^*$ is closed, we have $\mbox{cl}(T_C(p))\subset(N_C(p))^*$.

Next we prove that $\mbox{cl}(T_C(p))\supset (N_C(p))^*$. Assume that there exists $\hat{x}$ belongs to $(N_C(p))^*$ but not to $\mbox{cl}(T_C(p))$. Then it follows from the definition of tangent cone that $\exp_p\hat{x}\notin \mbox{cl}(C).$ From (\ref{sep}) in Theorem \ref{th3.1}, one has
$$\left\langle \hat{x},\exp_p^{-1}z\right\rangle_{T_{p} M}\leq 0, \quad \forall z\in \mbox{int}(\mbox{cl}(C)),$$
which shows that $\hat{x}\in N_{\mbox{cl}(C)}(p)\subset N_{C}(p)$ by Lemma \ref{l4.1}. Since $0\in \mbox{cl}(T_C(p)),$ so $\hat{x}\neq 0$ and it contradicts the fact $\hat{x}\in N_{C}(p)\cap (N_C(p))^*$.

By the facts $\mbox{cl}(T_C(p))\subset(N_C(p))^*$ and $\mbox{cl}(T_C(p))\supset (N_C(p))^*$, we have $\mbox{cl}(T_C(p))=(N_C(p))^*$ and so $T_C(p)=\mbox{cl}(T_C(p))=(N_C(p))^*$.
\end{proof}

\subsection{Optimality conditions of optimization problems with constraints}
In this subsection, we consider the following manifold optimization problem (for short, MOP) on a Riemannian manifold $M$:
\begin{eqnarray}\label{mop}
\left\{
\begin{array}{l}
	\min f(x)\\
	\mbox{s.t.}\quad x\in S,
\end{array}
\right.
\end{eqnarray}
where $S\subset M$ is a nonempty geodesic convex subset and $f: M\rightarrow \mathbb{R}$ is a differentiable function.

Based on Theorem \ref{th4.1}, we can obtain the necessary conditions for the local optimal solution of the MOP \eqref{mop} as follows.
\begin{theorem}\label{t4.2}
	If $\bar{x}$ is the local optimal solution of the MOP \eqref{mop} and $S$ is a locally geodesic convex subset of $M$, then $-\mbox{grad}f(\bar{x})\in N_S(\bar{x}).$
\end{theorem}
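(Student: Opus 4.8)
The plan is to establish the first-order necessary condition by contradiction, mimicking the classical Euclidean argument but using the exponential map to move between $M$ and the tangent space $T_{\bar{x}}M$. Suppose $-\mathrm{grad}f(\bar{x})\notin N_S(\bar{x})$. By Theorem \ref{th4.1}, $N_S(\bar{x})=(T_S(\bar{x}))^*$ is the polar of $T_S(\bar{x})$, and since $T_S(\bar{x})$ is a nonempty closed cone with $T_S(\bar{x})=(N_S(\bar{x}))^*$, the bipolar relationship gives that $-\mathrm{grad}f(\bar{x})\notin N_S(\bar{x})$ forces the existence of some direction $v\in T_S(\bar{x})$ with $\langle -\mathrm{grad}f(\bar{x}),v\rangle_{T_{\bar{x}}M}>0$, i.e. $\langle \mathrm{grad}f(\bar{x}),v\rangle_{T_{\bar{x}}M}<0$. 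Concretely, if $\langle -\mathrm{grad}f(\bar{x}),v\rangle\le 0$ held for every $v\in T_S(\bar{x})$ then by definition $-\mathrm{grad}f(\bar{x})\in (T_S(\bar{x}))^*=N_S(\bar{x})$, a contradiction.

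Next I would use the characterization of the tangent cone to realize $v$ as a genuine feasible direction. Since $v\in T_S(\bar{x})$, either $v=0$ (impossible, since then the inner product is zero, not negative) or $v\in V(\bar{x})$, so by Lemma \ref{l4.2} there is a sequence $x^k\in\mathrm{int}(S)\subset S$ with $x^k\to\bar{x}$ and nonnegative scalars $\alpha_k$ such that $v=\lim_{k\to\infty}\alpha_k\exp^{-1}_{\bar{x}}x^k$. Set $w_k=\alpha_k\exp^{-1}_{\bar{x}}x^k\in T_{\bar{x}}M$, so $w_k\to v$ and $x^k=\exp_{\bar{x}}(\alpha_k^{-1}w_k)$ lies in $S$ for all $k$ (discarding the trivial case $\alpha_k=0$). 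Writing $\beta_k=\alpha_k^{-1}\ge 0$, one checks that $\beta_k\to 0$: indeed $\|\exp^{-1}_{\bar{x}}x^k\|=d(\bar{x},x^k)\to 0$ while $\alpha_k d(\bar{x},x^k)=\|w_k\|\to\|v\|>0$, hence $\beta_k=d(\bar{x},x^k)/\|v\|+o(\cdot)\to 0$. Thus $x^k=\exp_{\bar{x}}(\beta_k w_k)$ with $\beta_k\to 0_+$ and $\beta_k w_k\to 0$, so $\{x^k\}$ is a sequence of feasible points converging to $\bar{x}$ along the direction $v$.

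Then I would derive a descent contradiction from differentiability of $f$. Consider $g(t)=f(\exp_{\bar{x}}(tv))$ for small $t\ge 0$; since $t\mapsto\exp_{\bar{x}}(tv)$ is the geodesic with initial velocity $v$, the chain rule gives $g'(0)=\langle\mathrm{grad}f(\bar{x}),v\rangle_{T_{\bar{x}}M}<0$. More carefully, to stay inside $S$ I would instead evaluate $f$ along the feasible points $x^k$: a first-order Taylor expansion of $f\circ\exp_{\bar{x}}$ at $0$ in the tangent space yields
\begin{equation}\label{eq:taylor}
f(x^k)=f\bigl(\exp_{\bar{x}}(\beta_k w_k)\bigr)=f(\bar{x})+\beta_k\langle\mathrm{grad}f(\bar{x}),w_k\rangle_{T_{\bar{x}}M}+o(\beta_k).
\end{equation}
Since $w_k\to v$ and $\langle\mathrm{grad}f(\bar{x}),v\rangle_{T_{\bar{x}}M}<0$, for $k$ large the bracket in \eqref{eq:taylor} is bounded above by $\tfrac12\langle\mathrm{grad}f(\bar{x}),v\rangle<0$, so $f(x^k)<f(\bar{x})$ for all sufficiently large $k$; but $x^k\in S$ and $x^k\to\bar{x}$, contradicting local optimality of $\bar{x}$. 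Hence $-\mathrm{grad}f(\bar{x})\in N_S(\bar{x})$.

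I expect the main obstacle to be the careful justification of the Taylor expansion \eqref{eq:taylor} together with the control $\beta_k\to 0_+$, i.e. making precise that $f\circ\exp_{\bar{x}}$ is differentiable at the origin of $T_{\bar{x}}M$ with derivative $\mathrm{grad}f(\bar{x})$, and that the $o(\beta_k)$ term is uniform enough as $w_k$ varies in a bounded neighborhood of $v$; this is where smoothness of the exponential map on the starlike neighborhood $\mathscr{U}$ and differentiability of $f$ must be combined. The bipolar step also deserves care: it relies essentially on Theorem \ref{th4.1}, which guarantees $T_S(\bar{x})$ is closed and equals $(N_S(\bar{x}))^*$, so that failure of $-\mathrm{grad}f(\bar{x})$ to lie in the polar $N_S(\bar{x})=(T_S(\bar{x}))^*$ really does produce a strict-inequality direction in $T_S(\bar{x})$.
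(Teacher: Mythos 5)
Your proof is correct and is essentially the contrapositive of the paper's argument: both rest on Lemma~\ref{l4.2}'s description of $T_S(\bar{x})$ together with the first-order behaviour of $f$ along geodesics from $\bar{x}$ into $\mathrm{int}(S)$, the paper verifying $\left\langle -\mbox{grad}f(\bar{x}),y\right\rangle_{T_{\bar{x}}M}\leq 0$ directly for every $y\in T_S(\bar{x})$ via one-sided directional derivatives, while you assume a violating direction $v$ and extract feasible descent points. The two technical worries you flag are non-issues: the ``bipolar'' step needs only the definition of $N_S(\bar{x})$ as the polar of $T_S(\bar{x})$ (not Theorem~\ref{th4.1}), and the $o(\beta_k)$ term requires no uniformity over $w_k$, since it is just pointwise differentiability of $f\comp\exp_{\bar{x}}$ at the origin evaluated at $u=\beta_k w_k=\exp_{\bar{x}}^{-1}x^k$ with $\|u\|=\beta_k\|w_k\|$ and $\|w_k\|$ bounded.
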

\begin{proof} From Lemma \ref{l4.2}, for any $y\in T_S(\bar{x}),$ there exist a sequence $\{x^k\}\subset \mbox{int}(S)$ with  $x^k\to \bar{x}$ and a nonnegative number sequence $\{\alpha_k\}$ such that $y=\lim_{k\rightarrow +\infty}\alpha_k\exp^{-1}_{\bar{x}}x^k.$ Since $f$ is differentiable at $\bar{x},$ we have
$$
\langle \mbox{grad}f(\bar{x}), \dot{\gamma}_{\bar{x},x^k}(0)\rangle_{T_{\bar{x}}M}=\lim_{t\rightarrow 0+}\frac{f(\gamma_{\bar{x},x^k}(t))-f(\bar{x})}{t},
$$
where $x^k\in \mbox{int}(S)$ and $\gamma_{\bar{x},x^k}(t)$ is the unique geodesic from $\bar{x}$ to $x^k.$ Because $S$ is a locally geodesic convex set and $x^k\in \mbox{int}(S),$  we know that $\gamma_{\bar{x},x^k}(t)$ lies in $\mbox{int}(S)$ as $0<t\leq 1.$

If $\bar{x}$ is the local optimal solution, then $f(x)\geq f(\bar{x})$ for any $x\in S$ and so
$$\langle -\mbox{grad}f(v), \dot{\gamma}_{\bar{x},x^k}(0)\rangle_{T_{\bar{x}}M}=\langle -\mbox{grad}f(\bar{x}), \exp_{\bar{x}}^{-1}x^k\rangle_{T_{\bar{x}}M}\leq 0, \forall x^k\in \mbox{int}(S).$$
Since $\alpha_k\geq 0$, it follows from Theorem \ref{th4.1} that $T_S(\bar{x})$ is closed and so
\begin{equation}\label{e4.1}
	\langle -\mbox{grad}f(\bar{x}), \lim_{k\rightarrow +\infty}\alpha_k\exp_{\bar{x}}^{-1}x^k\rangle_{T_{\bar{x}}M}=\langle -\mbox{grad}f(\bar{x}), y\rangle_{T_{\bar{x}}M}\leq 0, \quad \forall y\in T_S(\bar{x}).
\end{equation}
This implies that $-\mbox{grad}f(\bar{x})\in N_S(\bar{x})$.
\end{proof}

Next, we consider how the MOP \eqref{mop} can be further characterized if the feasible set $S$ is defined by a set of inequality constraints. More precisely, we consider the following MOP with a set of inequality constraints (for short, IEQMOP):
\begin{eqnarray}\label{ieqmop}
\left\{
\begin{array}{l}
	\min f(x)\\
	\mbox{s.t.} \quad  x\in S=\{x|g_i(x)\leq 0, i=1,2,\ldots, l\},
\end{array}
\right.
\end{eqnarray}
where $g_i: M\rightarrow \mathbb{R},i=1,2,\ldots, l$ are geodesic convex functions.
Let $S_j=\{x\in M|g_j(x)\leq 0\}$ and $S=\cap_{i=1}^l S_i$. Clearly, for each $i=1,2,\ldots, l$, $S_i$ is a locally geodesic convex set by the definition of the geodesic convex function and so the feasible set $S$ is also a locally geodesic convex set. The constraints that satisfy $g(\bar{x})=0$ at a feasible solution $\bar{x}$ are called active constraints, and their index set is denoted as $I(\bar{x})=\{i|g_i(\bar{x})=0\}$. If the function $g$ is differentiable at $\bar{x}$, then the following linearization cone $C_S(\bar{x})$ can be defined:
$$
C_S(\bar{x})=\{y\in T_{\bar{x}}M|\langle\mbox{grad}g_i(\bar{x}),y\rangle_{T_{\bar{x}}M}\leq 0, i\in I(\bar{x}) \}.
$$
Thus, both the tangent cone and linearization cone are approximate descriptions on the tangent space of the geodesic convex set at any point in it, and they have the following relationship.
\begin{lemma}\label{l4.3}
	For any point $\bar{x}\in S$ in IEQMOP \eqref{ieqmop}, one has $T_S(\bar{x})\subset C_S(\bar{x}).$
\end{lemma}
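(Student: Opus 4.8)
The plan is to start from the characterization of the tangent cone provided by Lemma \ref{l4.2} and then exploit the gradient (subgradient) inequality for geodesic convex functions. So, fix $\bar{x}\in S$ and let $y\in T_S(\bar{x})$. If $y=0$ then trivially $\langle\mbox{grad}g_i(\bar{x}),0\rangle_{T_{\bar{x}}M}=0\leq 0$ for all $i\in I(\bar{x})$, hence $0\in C_S(\bar{x})$. Otherwise, by Lemma \ref{l4.2} there exist a sequence $\{x^k\}\subset\mbox{int}(S)$ with $x^k\to\bar{x}$ and a nonnegative sequence $\{\alpha_k\}$ such that $y=\lim_{k\to+\infty}\alpha_k\exp^{-1}_{\bar{x}}x^k$. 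For $k$ large enough, $x^k$ lies inside the geodesic convex ball around $\bar{x}$, so there is a unique minimal geodesic $\gamma_{\bar{x},x^k}:[0,1]\to M$ with $\gamma_{\bar{x},x^k}(0)=\bar{x}$, $\gamma_{\bar{x},x^k}(1)=x^k$ and $\dot{\gamma}_{\bar{x},x^k}(0)=\exp^{-1}_{\bar{x}}x^k$.

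The next step is the key inequality. For each $i\in I(\bar{x})$, the function $t\mapsto g_i(\gamma_{\bar{x},x^k}(t))$ is convex on $[0,1]$ by Definition \ref{d2.9}, and it is differentiable since $g_i$ is differentiable, with derivative at $0$ equal to $\langle\mbox{grad}g_i(\bar{x}),\dot{\gamma}_{\bar{x},x^k}(0)\rangle_{T_{\bar{x}}M}=\langle\mbox{grad}g_i(\bar{x}),\exp^{-1}_{\bar{x}}x^k\rangle_{T_{\bar{x}}M}$. Convexity of a one-variable function gives $g_i(\gamma_{\bar{x},x^k}(1))\geq g_i(\gamma_{\bar{x},x^k}(0))+(g_i\circ\gamma_{\bar{x},x^k})'(0)$, i.e.
$$
\langle\mbox{grad}g_i(\bar{x}),\exp^{-1}_{\bar{x}}x^k\rangle_{T_{\bar{x}}M}\leq g_i(x^k)-g_i(\bar{x}).
$$
Since $x^k\in\mbox{int}(S)\subset S$ we have $g_i(x^k)\leq 0$, and since $i\in I(\bar{x})$ we have $g_i(\bar{x})=0$; hence the right-hand side is $\leq 0$. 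Multiplying by $\alpha_k\geq 0$ yields $\langle\mbox{grad}g_i(\bar{x}),\alpha_k\exp^{-1}_{\bar{x}}x^k\rangle_{T_{\bar{x}}M}\leq 0$ for all large $k$.

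Finally, letting $k\to+\infty$ and using the continuity of the Riemannian inner product (Lemma \ref{l2.1}(iii)), together with $\alpha_k\exp^{-1}_{\bar{x}}x^k\to y$, we obtain $\langle\mbox{grad}g_i(\bar{x}),y\rangle_{T_{\bar{x}}M}\leq 0$ for every $i\in I(\bar{x})$, i.e. $y\in C_S(\bar{x})$. Since $y\in T_S(\bar{x})$ was arbitrary, $T_S(\bar{x})\subset C_S(\bar{x})$.

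I do not expect a genuine obstacle here; the argument is essentially routine once Lemma \ref{l4.2} is in hand. The only point demanding a little care is the gradient inequality: one must justify that differentiability of $g_i$ makes $t\mapsto(g_i\circ\gamma_{\bar{x},x^k})(t)$ a differentiable convex function whose derivative at $t=0$ is exactly $\langle\mbox{grad}g_i(\bar{x}),\exp^{-1}_{\bar{x}}x^k\rangle_{T_{\bar{x}}M}$, and to ensure the unique minimal geodesic $\gamma_{\bar{x},x^k}$ exists for $k$ large (which is guaranteed because $x^k\to\bar{x}$ places the pair eventually inside the geodesic convex radius, cf. Lemma \ref{roth1} and Definition \ref{rodf1}). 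Everything else is a limiting argument backed by Lemma \ref{l2.1}.
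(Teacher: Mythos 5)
Your proof is correct and follows essentially the same route as the paper: both start from the Lemma \ref{l4.2} characterization $y=\lim_k\alpha_k\exp^{-1}_{\bar{x}}x^k$, establish $\langle\mathrm{grad}\,g_i(\bar{x}),\exp^{-1}_{\bar{x}}x^k\rangle_{T_{\bar{x}}M}\leq 0$ for $i\in I(\bar{x})$, and pass to the limit. The only (immaterial) difference is how that intermediate inequality is obtained: the paper notes that the difference quotient $t^{-1}\bigl(g_i(\exp_{\bar{x}}t\exp^{-1}_{\bar{x}}x^k)-g_i(\bar{x})\bigr)$ is nonpositive because the geodesic segment stays in $S$, whereas you invoke the first-order convexity inequality $g_i(x^k)\geq g_i(\bar{x})+(g_i\comp\gamma_{\bar{x},x^k})'(0)$ together with $g_i(x^k)\leq 0=g_i(\bar{x})$.
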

\begin{proof} Let $y\in T_S(\bar{x})$. Then there exist a sequence $\{x^k\}\subset \mbox{int}(S)$ with $x^k\to \bar{x}$ and a nonnegative number sequence $\{\alpha_k\}$ such that $y=\lim_{k\rightarrow +\infty}\alpha_k\exp^{-1}_{\bar{x}}x^k.$ Since $x^k\in \mbox{int}(S)$ and $g_i(\bar{x})=0$ for $i\in I(\bar{x})$, one has
\begin{equation}\label{e4.2}
	\langle\mbox{grad}g_i(\bar{x}),\exp_{\bar{x}}^{-1}x^k\rangle_{T_{\bar{x}}M}=\lim_{t\rightarrow 0+}\frac{g_i(\exp_{\bar{x}}t\exp_{\bar{x}}^{-1}x^k)-g_i(\bar{x})}{t}=\lim_{t\rightarrow 0+}\frac{g_i(\exp_{\bar{x}}t\exp_{\bar{x}}^{-1}x^k)-0}{t}\leq 0.
\end{equation}
Taking $k\rightarrow +\infty$ in (\ref{e4.2}), since $\alpha_k\geq 0$, we have
$$\langle \mbox{grad}g_i(\bar{x}), \lim_{k\rightarrow +\infty}\alpha_k\exp_{\bar{x}}^{-1}x^k\rangle_{T_{\bar{x}}M}=\langle \mbox{grad}g_i(\bar{x}), y\rangle_{T_{\bar{x}}M}\leq 0, \quad i\in I(\bar{x})$$
and so $y\in C_S(\bar{x})$.
\end{proof}

\begin{lemma}\label{l4.4}
The polar cone
$$
C_S(\bar{x})^*=\left\{\left.\sum_{i\in I(\bar{x})}\lambda_i \mbox{grad}g_i(\bar{x})\right|\lambda_i\geq 0\right\}.
$$
\end{lemma}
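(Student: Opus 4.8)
The statement is the classical Farkas-type identity: the polar of the linearization cone $C_S(\bar x) = \{\,y\in T_{\bar x}M : \langle \mbox{grad}g_i(\bar x),y\rangle_{T_{\bar x}M}\le 0,\ i\in I(\bar x)\,\}$ equals the finitely generated convex cone spanned by the active gradients. Since everything lives inside the single finite-dimensional inner product space $T_{\bar x}M$, the plan is to reduce the statement entirely to the Euclidean Farkas Lemma applied in $T_{\bar x}M\cong\mathbb R^m$. Write $a_i=\mbox{grad}g_i(\bar x)$ for $i\in I(\bar x)$ and let $K=\{\sum_{i\in I(\bar x)}\lambda_i a_i:\lambda_i\ge 0\}$ be the right-hand side. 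The goal is $C_S(\bar x)^*=K$.

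First I would prove the easy inclusion $K\subset C_S(\bar x)^*$: for any $y\in C_S(\bar x)$ and any $u=\sum_i\lambda_i a_i\in K$ with $\lambda_i\ge 0$, linearity of the inner product gives $\langle u,y\rangle_{T_{\bar x}M}=\sum_i\lambda_i\langle a_i,y\rangle_{T_{\bar x}M}\le 0$, so $u\in C_S(\bar x)^*$; hence $K\subset C_S(\bar x)^*$. For the reverse inclusion $C_S(\bar x)^*\subset K$, the key point is that $K$ is a closed convex cone — closedness holds because a finitely generated cone in a finite-dimensional space is always closed (this is where the finiteness of the index set $I(\bar x)$ is used). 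Then, arguing by contraposition, suppose $w\in T_{\bar x}M\setminus K$. Since $K$ is closed and convex, the separation theorem on the tangent space (the classical one, or equivalently Corollary 3.1 / Theorem 3.1 read in $T_{\bar x}M$) produces $y\in T_{\bar x}M$ and a scalar with $\langle y,w\rangle_{T_{\bar x}M}>0$ while $\langle y,u\rangle_{T_{\bar x}M}\le 0$ for all $u\in K$. Testing the latter against each generator $u=a_i$ (and against $u=\lambda a_i$, $\lambda\to\infty$, to rule out positivity) shows $\langle a_i,y\rangle_{T_{\bar x}M}\le 0$ for every $i\in I(\bar x)$, i.e. $y\in C_S(\bar x)$; but then $\langle y,w\rangle_{T_{\bar x}M}>0$ shows $w\notin C_S(\bar x)^*$. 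Combining the two inclusions yields $C_S(\bar x)^*=K$.

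Alternatively — and this is the route I would actually write, since the paper has set up projections rather than the full Euclidean Farkas lemma — one can invoke Farkas' Lemma directly in its standard finite-dimensional form: for a real matrix $A$ (here the rows are the $a_i$) and vector $w$, exactly one of "$w=\sum_i\lambda_i a_i$ with $\lambda_i\ge 0$" and "there exists $y$ with $\langle a_i,y\rangle\le 0$ for all $i$ and $\langle w,y\rangle>0$" holds. Identifying $T_{\bar x}M$ with $\mathbb R^m$ via an orthonormal basis, the first alternative is exactly $w\in K$ and the second is exactly $w\notin C_S(\bar x)^*$, so $K$ and $C_S(\bar x)^*$ have the same complement and therefore coincide.

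The only genuinely delicate point is the closedness of the finitely generated cone $K$, which is what makes the separation argument go through with a strict inequality; this is a standard fact (it fails for infinitely many generators), and here it is guaranteed precisely because $I(\bar x)$ is finite. Everything else is bookkeeping: transferring the classical Euclidean statement to the tangent space $T_{\bar x}M$, which is legitimate because $T_{\bar x}M$ is a genuine finite-dimensional inner product space and none of the manifold's curvature enters once we have fixed the base point $\bar x$. I expect no obstacle beyond correctly citing Farkas' Lemma (or re-deriving it from separation of the closed convex cone $K$ from the point $w$).
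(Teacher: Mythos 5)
Your proposal is correct, and it rests on exactly the same ingredients as the paper's proof: both reduce the statement to finite-dimensional convex duality inside the inner product space $T_{\bar{x}}M$, both prove the easy inclusion $K\subset C_S(\bar{x})^*$ by linearity, and both hinge on the closedness of the finitely generated cone $K$. The only difference is organizational: the paper establishes the dual identity $C_S(\bar{x})=K^*$ (your ``testing against generators'' step is its second half) and then concludes via the bipolar theorem $C_S(\bar{x})^*=K^{**}=K$, whereas you unpack that bipolar step into an explicit separation argument (equivalently, a direct appeal to Farkas' Lemma) producing a strictly separating $y$ for any $w\notin K$. Since the bipolar theorem for closed convex cones \emph{is} the separation theorem, the two routes are mathematically identical; your version is slightly more self-contained, and you are in fact more careful than the paper on the one genuinely nontrivial point, namely that a cone generated by finitely many vectors is closed --- the paper dismisses this with ``it is easy to see,'' while you correctly flag it as the step that makes the strict separation (and hence $K^{**}=K$) legitimate.
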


\begin{proof}
Let $K=\left\{\sum_{i\in I(\bar{x})}\lambda_i \mbox{grad}g_i(\bar{x})|\lambda_i\geq 0\right\}$. Then it easy to see that $K$ is a closed convex set in $T_{\bar{x}}M.$ For any $y\in C_S(\bar{x}),$ we obtain
 $$\big\langle \sum_{i\in I(\bar{x})}\lambda_i \mbox{grad}g_i(\bar{x}), y\big\rangle_{T_{\bar{x}}M}=\sum_{i\in I(\bar{x})}\lambda_i\langle \mbox{grad}g_i(\bar{x}), y\rangle_{T_{\bar{x}}M}\leq 0.$$
Thus, $\sum_{i\in I(\bar{x})}\lambda_i \mbox{grad}g_i(\bar{x})\in C_S(\bar{x})^*$ and so $y\in K^*,$ which means that $K\subset C_S(\bar{x})^*$ and $C_S(\bar{x})\subset K^*.$

On the other hand, for any $x\in K^*$ and any $\lambda_i\geq 0,$ one has
$$\left\langle x, \sum_{i\in I(\bar{x})}\lambda_i \mbox{grad}g_i(\bar{x})\right\rangle_{T_{\bar{x}}M}=\sum_{i\in I(\bar{x})}\lambda_i\langle x, \mbox{grad}g_i(\bar{x})\rangle_{T_{\bar{x}}M}\leq 0.$$
From the arbitrariness of $\lambda_i,$ we known that $\langle x, \mbox{grad}g_i(\bar{x})\rangle_{T_{\bar{x}}M}\leq 0$. Thus $x\in C_S(\bar{x})$ and so $K^*\subset C_S(\bar{x}).$ By the closed convexity of $K,$ we have $C_S(\bar{x})^*=K^{**}=K.$
\end{proof}

\begin{theorem}
Let $\bar{x}$ be the local optimal solution of IEQMOP \eqref{ieqmop}. Then there exist $\bar{\lambda}_0,\bar{\lambda}_1,\ldots,\bar{\lambda}_l$ such that
$$
\left\{
\begin{array}{l}
\bar{\lambda}_0\mbox{grad}f(\bar{x})+\sum_{i=1}^l \bar{\lambda}_i\mbox{grad}g_i(\bar{x})=0;\\
g_i(\bar{x})\leq 0, \; \bar{\lambda}_i g_i(\bar{x})=0, \; i=1,\ldots, l;\\
\bar{\lambda}_i\geq 0, \; i=1,\ldots, l.
\end{array}
\right.
$$
\end{theorem}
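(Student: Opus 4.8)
The plan is to run the classical Fritz–John argument, transported to the tangent space $T_{\bar x}M$ via the separation and cone machinery already developed. First I would invoke Theorem \ref{t4.2}: since $\bar x$ is a local optimal solution of IEQMOP \eqref{ieqmop} and the feasible set $S=\cap_{i=1}^l S_i$ is locally geodesic convex, we get $-\mbox{grad}f(\bar x)\in N_S(\bar x)$. By Theorem \ref{th4.1} the normal cone is the polar of the tangent cone, so equivalently $\langle \mbox{grad}f(\bar x),y\rangle_{T_{\bar x}M}\le 0$ for all $y\in T_S(\bar x)$. Then I would use Lemma \ref{l4.3} to pass to the linearization cone: since $T_S(\bar x)\subset C_S(\bar x)$, this inequality alone does \emph{not} immediately give the claim, so instead I would argue directly in $C_S(\bar x)$ — the point is that for geodesic convex $g_i$ the linearization cone is in fact an \emph{inner} approximation that suffices, and combined with Lemma \ref{l4.4} it yields the multiplier representation.

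The cleaner route, which I would actually carry out, is to work with the cone $K=\{\sum_{i\in I(\bar x)}\lambda_i\mbox{grad}g_i(\bar x)\mid \lambda_i\ge 0\}$ from Lemma \ref{l4.4} and split into two cases. Case 1: $-\mbox{grad}f(\bar x)\in K$. Then $-\mbox{grad}f(\bar x)=\sum_{i\in I(\bar x)}\bar\lambda_i\mbox{grad}g_i(\bar x)$ with $\bar\lambda_i\ge 0$; set $\bar\lambda_0=1$, set $\bar\lambda_i=0$ for $i\notin I(\bar x)$, and note $\bar\lambda_i g_i(\bar x)=0$ holds automatically (it is $0$ on the active set and $\bar\lambda_i=0$ off it), so all three KKT lines hold. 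Case 2: $-\mbox{grad}f(\bar x)\notin K$. Since $K=C_S(\bar x)^*$ is a nonempty closed convex cone in the finite-dimensional inner product space $T_{\bar x}M$, I would apply the Euclidean separation theorem (equivalently, projection onto the closed convex set $K$, which is exactly the ingredient the paper has been mimicking) to obtain a nonzero $d\in T_{\bar x}M$ with $\langle d, w\rangle_{T_{\bar x}M}\le 0$ for all $w\in K$ and $\langle d,-\mbox{grad}f(\bar x)\rangle_{T_{\bar x}M}>0$. The first property says $\langle d,\mbox{grad}g_i(\bar x)\rangle_{T_{\bar x}M}\le 0$ for every $i\in I(\bar x)$, i.e. $d\in C_S(\bar x)=K^*$. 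Then I would derive a contradiction with local optimality: using Lemma \ref{l4.2}-type reasoning, a direction $d\in C_S(\bar x)$ produced from $\mbox{int}(S)$ is a feasible descent direction — along the geodesic $t\mapsto\exp_{\bar x}(td)$ one has, for $t$ small, $g_i(\exp_{\bar x}td)<0$ for active $i$ (strict decrease since $\langle\mbox{grad}g_i,d\rangle<0$, or using that $\exp_{\bar x}td$ stays in $\mbox{int}(S_i)$), $g_i<0$ stays negative for inactive $i$ by continuity, hence $\exp_{\bar x}td\in S$; meanwhile $\frac{d}{dt}f(\exp_{\bar x}td)|_{t=0}=\langle\mbox{grad}f(\bar x),d\rangle=-\langle d,-\mbox{grad}f(\bar x)\rangle<0$, so $f$ strictly decreases along a feasible arc, contradicting local optimality of $\bar x$. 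Therefore Case 2 is impossible, and Case 1 delivers the multipliers.

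An honest alternative — and the one that most closely matches the lemmas as stated — is: from Theorem \ref{t4.2}, $-\mbox{grad}f(\bar x)\in N_S(\bar x)$; one shows $N_S(\bar x)\subset C_S(\bar x)^*$ (which follows from $T_S(\bar x)\subset C_S(\bar x)$ by the order-reversing property of polars, Lemma \ref{l4.1}); hence $-\mbox{grad}f(\bar x)\in C_S(\bar x)^*$, and Lemma \ref{l4.4} gives $-\mbox{grad}f(\bar x)=\sum_{i\in I(\bar x)}\bar\lambda_i\mbox{grad}g_i(\bar x)$ with $\bar\lambda_i\ge 0$. Setting $\bar\lambda_0=1$ and $\bar\lambda_i=0$ for $i\notin I(\bar x)$ then yields the stationarity equation, complementary slackness $\bar\lambda_i g_i(\bar x)=0$, primal feasibility $g_i(\bar x)\le 0$, and $\bar\lambda_i\ge 0$ at once. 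This avoids the separation case-split entirely and is probably the intended proof given that Theorem \ref{t4.2} and Lemma \ref{l4.4} are stated immediately beforehand.

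The main obstacle is the inclusion $N_S(\bar x)\subset C_S(\bar x)^*$ — equivalently, that the inequality $\langle\mbox{grad}f(\bar x),y\rangle\le 0$ only on the (possibly smaller) set $T_S(\bar x)$ is enough to force membership in $C_S(\bar x)^*=K$. This is precisely a constraint-qualification-type point: in the Euclidean setting one needs $T_S(\bar x)$ and $C_S(\bar x)$ to have the same polar, which holds here because $C_S(\bar x)\subset \mbox{cl}(T_S(\bar x))$ for geodesic convex $g_i$ (every linearized-feasible direction is genuinely tangent, via constructing a curve into $\mbox{int}(S)$ as in Lemma \ref{l4.2}). I would make sure to state and use that reverse inclusion $C_S(\bar x)\subset \mbox{cl}(T_S(\bar x))=T_S(\bar x)$ explicitly; once it is in hand, taking polars and applying Theorem \ref{th4.1} and Lemma \ref{l4.4} closes the argument cleanly.
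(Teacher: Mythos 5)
Your proposal proves (or attempts to prove) a strictly stronger statement than the one the paper establishes: the KKT conditions with $\bar\lambda_0=1$. The theorem here is only a Fritz--John condition — the multiplier $\bar\lambda_0$ on $\mbox{grad}f(\bar x)$ is allowed to vanish — and the paper obtains it by a homogenization/Gordan-type argument: it lifts to $T_{\bar x}M\times\mathbb{R}$, forms the cone $C=\{(y,a)\mid a+\langle \mbox{grad}f(\bar x),y\rangle\le 0,\ a+\langle \mbox{grad}g_i(\bar x),y\rangle\le 0,\ i\in I(\bar x)\}$, observes $(0,1)\in C^*$, and reads off multipliers with $\bar\lambda_0+\sum_{i\in I(\bar x)}\bar\lambda_i=1$ and all $\bar\lambda_i\ge 0$ from a Lemma~\ref{l4.4}-type representation of that polar. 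The normalized KKT form with $\bar\lambda_0=1$ is deliberately deferred to the \emph{next} theorem, which assumes the constraint qualification $C_S(\bar x)\subset \mbox{co}\,T_S(\bar x)$.

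The gap in your argument is exactly the point you flag at the end and then wave away: the inclusion $C_S(\bar x)\subset\mbox{cl}(T_S(\bar x))$ is \emph{false} for general geodesic convex $g_i$; it is a constraint qualification, not a consequence of convexity. Already in $M=\mathbb{R}$ take $g_1(x)=x^2$, so $S=\{0\}$ and $T_S(0)=\{0\}$, while $\mbox{grad}g_1(0)=0$ gives $C_S(0)=\mathbb{R}$; with $f(x)=x$ the point $\bar x=0$ is the minimizer, yet $-\mbox{grad}f(0)=-1\notin K=\{0\}=C_S(0)^*$, so your Case~1 and your ``honest alternative'' both fail. Case~2 cannot rescue this either: separating $-\mbox{grad}f(\bar x)$ from the closed convex \emph{cone} $K$ only yields $\langle d,\mbox{grad}g_i(\bar x)\rangle\le 0$ (the supremum of a linear functional over a cone is $0$ whenever it is finite, never strictly negative), so you do not get the strict inequalities needed to conclude that $\exp_{\bar x}(td)$ is feasible for small $t>0$; in the example above every $d$ satisfies $\langle d,\mbox{grad}g_1(\bar x)\rangle=0$ and there is no feasible descent arc, so Case~2 is not impossible and no contradiction materializes. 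To prove the theorem as stated, drop the normalization $\bar\lambda_0=1$ and run the Fritz--John/Gordan argument (no $y$ satisfies $\langle\mbox{grad}f(\bar x),y\rangle<0$ and $\langle\mbox{grad}g_i(\bar x),y\rangle<0$ for all $i\in I(\bar x)$, then dualize), which requires no constraint qualification.
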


\begin{proof}
By (\ref{e4.1}) and Lemma \ref{l4.3}, for any $y\in T_S(\bar{x}),$ we have
$$
\langle -\mbox{grad}f(\bar{x}),y\rangle_{T_{\bar{x}}M}\leq 0,\quad
\langle \mbox{grad}g_i(\bar{x}),y\rangle_{T_{\bar{x}}M}\leq 0,\quad  i\in I(\bar{x})=\{i|g_i(\bar{x})=0\}
$$
and so
$$\{y\in T_{\bar{x}}M|\langle \mbox{grad}f(\bar{x}),y\rangle_{T_{\bar{x}}M}\leq0\}\cap \{y\in T_{\bar{x}}M|\langle \mbox{grad}g_i(\bar{x}),y\rangle_{T_{\bar{x}}M}\leq0, i\in I(\bar{x})\}=\{0\}.$$
Define a set $C$ by setting
$$C=\{(y,a)\in T_{\bar{x}}M\times \mathbb{R}|a+\langle \mbox{grad}f(\bar{x}),y\rangle_{T_{\bar{x}}M}\leq 0, a+\langle \mbox{grad}g_i(\bar{x}),y\rangle_{T_{\bar{x}}M}\leq 0, i\in I(\bar{x})\}.$$
Then it is easy to see that $(0,1)\cdot(y,a)^T=a\leq 0$ for all $(y, a)\in C$ and so $(0,1)\in C^*.$ From Lemma \ref{l4.4}, we can immediately obtain
$$
\left\{
\begin{array}{l}
	\bar{\lambda}_0\mbox{grad}f(\bar{x})+\sum_{i\in I(\bar{x}))} \bar{\lambda}_i\mbox{grad}g_i(\bar{x})=0;\\
	\bar{\lambda}_0+ \sum_{i\in I(\bar{x}))} \bar{\lambda}_i=1,\bar{\lambda}_i\geq 0, i\in \{0\}\cup I(\bar{x}).
\end{array}
\right.$$
This completes the proof.
\end{proof}

If some suitable constraint qualifications are added, we can obtain the following Karush-Kuhn-Tucker conditions of IEQMOP \eqref{ieqmop}.
\begin{theorem}
	Let $\bar{x}$ be the local optimal solution of (IEQMOP). If $C_S(\bar{x})\subset \mbox{co}\;T_S(\bar{x})$, then there exist $\bar{\lambda}_1,\bar{\lambda}_2,\ldots,\bar{\lambda}_l$ such that
	$$
	\left\{
	\begin{array}{l}
		\mbox{grad}f(\bar{x})+\sum_{i=1}^l \bar{\lambda}_i\mbox{grad}g_i(\bar{x})=0;\\
		\bar{\lambda}_i\geq 0, \; g_i(\bar{x})\leq 0, \; \bar{\lambda}_i g_i(\bar{x})=0,\; i=1,\ldots, l.
	\end{array}
	\right.
	$$
\end{theorem}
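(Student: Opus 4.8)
The plan is to reduce the final statement to the preceding theorem (the Fritz–John type condition) by showing that the constraint qualification $C_S(\bar x)\subset \mbox{co}\,T_S(\bar x)$ forces the multiplier $\bar\lambda_0$ attached to $\mbox{grad}f(\bar x)$ to be strictly positive, after which we simply rescale. First I would recall from the Fritz–John theorem just proved that there exist $\bar\lambda_0,\bar\lambda_1,\dots,\bar\lambda_l\geq 0$, not all zero, with $\bar\lambda_0\mbox{grad}f(\bar x)+\sum_{i\in I(\bar x)}\bar\lambda_i\mbox{grad}g_i(\bar x)=0$, $\bar\lambda_i g_i(\bar x)=0$, and (after the normalization in that proof) $\bar\lambda_0+\sum_{i\in I(\bar x)}\bar\lambda_i=1$; in particular the multipliers associated with inactive indices are zero, which already yields the complementary slackness $\bar\lambda_i g_i(\bar x)=0$ for all $i$.

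Next I would argue by contradiction: suppose $\bar\lambda_0=0$. Then $\sum_{i\in I(\bar x)}\bar\lambda_i\mbox{grad}g_i(\bar x)=0$ with $\bar\lambda_i\geq 0$ not all zero, so $0\neq w:=\sum_{i\in I(\bar x)}\bar\lambda_i\mbox{grad}g_i(\bar x)\in C_S(\bar x)^*$ by Lemma \ref{l4.4}, while at the same time $w=0$ — wait, that is immediate, so I instead extract the contradiction geometrically. The cleaner route: $\bar\lambda_0=0$ means $0\in C_S(\bar x)^*$ is attained by a \emph{nontrivial} conic combination, i.e.\ the vectors $\{\mbox{grad}g_i(\bar x)\}_{i\in I(\bar x)}$ are positively linearly dependent, and one shows this is incompatible with $C_S(\bar x)\subset \mbox{co}\,T_S(\bar x)$ together with $T_S(\bar x)\subset C_S(\bar x)$ (Lemma \ref{l4.3}), because the latter two inclusions give $\mbox{co}\,T_S(\bar x)=C_S(\bar x)$ and hence $C_S(\bar x)^{**}=C_S(\bar x)$ is a \emph{pointed-enough} cone; more directly, the local optimality combined with Theorem \ref{t4.2} gives $-\mbox{grad}f(\bar x)\in N_S(\bar x)=T_S(\bar x)^*$, and using $\mbox{co}\,T_S(\bar x)=C_S(\bar x)$ we get $T_S(\bar x)^*=(\mbox{co}\,T_S(\bar x))^*=C_S(\bar x)^*=\{\sum_{i\in I(\bar x)}\lambda_i\mbox{grad}g_i(\bar x):\lambda_i\geq 0\}$ by Lemma \ref{l4.4}. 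Therefore $-\mbox{grad}f(\bar x)=\sum_{i\in I(\bar x)}\bar\lambda_i\mbox{grad}g_i(\bar x)$ for some $\bar\lambda_i\geq 0$, which is exactly the KKT stationarity with coefficient $1$ on $\mbox{grad}f(\bar x)$.

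With that identity in hand the proof finishes quickly: set $\bar\lambda_i=0$ for $i\notin I(\bar x)$; then $\sum_{i=1}^l\bar\lambda_i\mbox{grad}g_i(\bar x)=\sum_{i\in I(\bar x)}\bar\lambda_i\mbox{grad}g_i(\bar x)=-\mbox{grad}f(\bar x)$, so $\mbox{grad}f(\bar x)+\sum_{i=1}^l\bar\lambda_i\mbox{grad}g_i(\bar x)=0$; feasibility $g_i(\bar x)\leq 0$ holds since $\bar x\in S$; complementary slackness $\bar\lambda_i g_i(\bar x)=0$ holds because for $i\in I(\bar x)$ we have $g_i(\bar x)=0$ and for $i\notin I(\bar x)$ we set $\bar\lambda_i=0$; and $\bar\lambda_i\geq 0$ by construction. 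I would therefore present the argument in two moves: (1) from Theorem \ref{t4.2}, $-\mbox{grad}f(\bar x)\in T_S(\bar x)^*$; (2) $T_S(\bar x)^*=C_S(\bar x)^*$ because $T_S(\bar x)\subset C_S(\bar x)\subset \mbox{co}\,T_S(\bar x)$ implies $C_S(\bar x)$ and $\mbox{co}\,T_S(\bar x)$ have the same polar as $T_S(\bar x)$ (polarity reverses inclusions and is insensitive to taking convex hull); then invoke Lemma \ref{l4.4} and read off the multipliers.

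The main obstacle I expect is verifying step (2) cleanly on the tangent space: one needs $S^* = (\mbox{co}\,S)^*$ for any set $S$ in $T_{\bar x}M$ — which is elementary — and then the sandwich $T_S(\bar x)\subset C_S(\bar x)\subset \mbox{co}\,T_S(\bar x)$ together with the fact (Theorem \ref{th4.1}) that $T_S(\bar x)$ is already closed to conclude $T_S(\bar x)^*=(\mbox{co}\,T_S(\bar x))^*\subset C_S(\bar x)^*\subset T_S(\bar x)^*$, forcing equality throughout. A secondary subtlety worth a sentence is making sure $\mbox{grad}g_i(\bar x)$ is the right object in the polar-cone computation, i.e.\ that the linearization in Lemma \ref{l4.3} and the polar description in Lemma \ref{l4.4} are using the same Riemannian inner product on $T_{\bar x}M$, which they are by construction. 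Everything else is bookkeeping on the index sets.
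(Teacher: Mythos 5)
Your final argument is correct and is essentially the paper's own proof: both use Theorem \ref{t4.2} to get $-\mbox{grad}f(\bar{x})\in N_S(\bar{x})=T_S(\bar{x})^*=(\mbox{co}\;T_S(\bar{x}))^*$, the hypothesis $C_S(\bar{x})\subset \mbox{co}\;T_S(\bar{x})$ to pass (via polarity reversing inclusions) into $C_S(\bar{x})^*$, and Lemma \ref{l4.4} to read off the nonnegative multipliers, setting them to zero on inactive indices. The opening detour through the Fritz--John multipliers and the abandoned contradiction argument are unnecessary, and your observation that the sandwich forces full equality of the polars is slightly more than the paper needs (one inclusion suffices), but the substance is the same.
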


\begin{proof} Since polar cone is a convex closed subset in $T_{\bar{x}}M,$ we have
$$
N_S(\bar{x})=T_S(\bar{x})^*=(\mbox{co}\;T_S(\bar{x}))^*.
$$
From Lemma \ref{l4.1} and Theorem \ref{t4.2}, if $\bar{x}$ is the local optimal solution, then
$$
-\mbox{grad}f(\bar{x})\in N_S(\bar{x})=(\mbox{co}\;T_S(\bar{x}))^*\subset C_S(\bar{x})^*.
$$
By Lemma \ref{l4.4}, one has
$$-\mbox{grad}f(\bar{x})\in C_S(\bar{x})^*=\left\{\left.\sum_{i\in I(\bar{x})}\bar{\lambda}_i \mbox{grad}g_i(\bar{x})\right|\bar{\lambda}_i\geq 0\right\}.$$
It means that there exists $\bar{\lambda}_i\geq 0$ with $i\in I(\bar{x})$ such that
$$\mbox{grad}f(\bar{x})+\sum_{i\in I(\bar{x})}\bar{\lambda}_i \mbox{grad}g_i(\bar{x})=0.$$
Let $\bar{\lambda}_i\geq 0$ if $i\in I(\bar{x})$ and $\bar{\lambda}_i= 0$ if $i\notin I(\bar{x})$. Then the Karush-Kuhn-Tucker conditions of IEQMOP \eqref{ieqmop} is given.
\end{proof}

\section{Conclusions}\noindent
\setcounter{equation}{0}
As an attempt of exploring the geodesic convexity, we obtained the separation theorem of geodesic convex sets using the property of projection. As applications, the optimality conditions of constrained optimization problems on manifolds is derived from the theory of cones, which is different from the previous studies. The method presented in this paper can help us to conduct more generalized research, such as the case where the objective function is a manifold mapping to another manifold. We foresee further progress in this topic in the nearby future. Here we have one open questions:
How to give the algebraic form of the separation theorem of two disjoint geodesic convex sets on a manifold?
	
%\noindent{\bf Data availability statement}

%No data were used to support this study.

%\noindent{\bf Conflicts of interest}

%We declare that there are no conflicts of interest regarding the publication of this paper.

\begin{acknowledgement}
This work was supported by the National Natural Science Foundation of China (No: 11901484, 12171377), Science and Technology Innovation Seedling Project Funding Program of Sichuan Province (No: 2022034), Natural Science Foundation of Sichuan Province (Youth) (No. 2022NSFSC1834) and Natural Science Starting Project of SWPU (No. 2023QHZ007).
\end{acknowledgement}

\end{document}